\newtheorem{theorem}{Theorem}[section]
\newtheorem{lemma}[theorem]{Lemma}
\newtheorem{proposition}[theorem]{Proposition}
\newtheorem{corollary}[theorem]{Corollary}
\newtheorem{definition}[theorem]{Definition}
\newtheorem{remark}[theorem]{Remark}
\renewcommand{\pmod}[1]{{\ifmmode\text{\rm\ (mod~$#1$)}\else\discretionary{}{}{\hbox{ }}\rm(mod~$#1$)\fi}}
\begin{document}
\title{Special Values of Anticyclotomic $L$-functions Modulo $\lambda$}
\author{Alia Hamieh}
\address{Department of Mathematics and Statistics, Queen's University, 516 Jeffery Hall, University Avenue, Kingston, ON K7L3N6, Canada}
 \email{ahamieh@mast.queensu.ca}
\subjclass[2010]{Primary 11G40; secondary 11G18, 11F67, 11F70}
\begin{abstract}
The purpose of this article is to generalize some results of Vatsal on the special values of Rankin-Selberg L-functions in an anticyclotomic $\mathbb{Z}_{p}$-extension. Let $g$ be a cuspidal Hilbert modular newform of parallel weight $(2,...,2)$ and level $\mathcal{N}$ over a totally real field $F$, and let $K/F$ be a totally imaginary quadratic extension of relative discriminant $\mathcal{D}$. We study the $l$-adic valuation of the special values $L(g,\chi,\frac{1}{2})$ as $\chi$ varies over the ring class characters of $K$ of $\mathcal{P}$-power conductor, for some fixed prime ideal $\mathcal{P}$. We prove our results under the only assumption that the prime to $\mathcal{P}$ part of $\mathcal{N}$ is relatively prime to $\mathcal{D}$. \end{abstract}
\maketitle
\thispagestyle{empty}
\setcounter{section}{-1}
\section{Introduction}

Let $E$ be an elliptic curve over $\mathbb{Q}$ of conductor $N$, and let $K/\mathbb{Q}$ be an imaginary quadratic field extension of discriminant $D$ such that $N$ and $D$ are relatively prime. Denote by $K_{\infty}$ the anticyclotomic $\mathbb{Z}_{p}$-extension of $K$, where $p$ is a given prime number with $p\nmid ND$. In 2002, Vatsal succeeded in settling a conjecture of Mazur pertaining to the size of the Mordell-Weil group $E(K_{\infty})$. In fact, Mazur's conjecture predicts that the group $E(K_{\infty})$ is finitely generated, and Vatsal proved in \cite{vatsal1} that this is true, at least when $E$ is ordinary at $p$, or when the class number of $K$ is prime to $p$.

 In more concrete terms, Vatsal considered the modular form $g$ associated to $E$ and the family of Rankin-Selberg $L$-functions $L(g,\chi,s)$ as $\chi$ varies over ring class characters of $K$ of $p$-power conductor. Under certain conditions on $g$ and $\chi$, the result of Vatsal asserts that the special values $L(g,\chi,1)$ are non-vanishing for all but finitely many $\chi$, provided that $p$ is an ordinary prime for $g$ or $p$ does not divide the class number of $K$. One consequence of this result is the non-triviality of certain Euler systems as formulated by Bertolini-Darmon in \cite{ber-dar} which in its turn implies that the desired statement about the Mordell-Weil group is true.  
 
In 2004, Cornut and Vatsal generalized in \cite{cor-vat} the above mentioned work of Vatsal to totally real fields.  Numerous technical complications arise due to the fact that a more general number field $F$ is considered. However, the basic arguments are ultimately the same, as the authors invoke deep theorems of Ratner \cite{ratner} on uniform distribution of unipotent orbits on $p$-adic Lie groups to deduce the desired result.
 
In 2003, Vatsal extended the results and methods of \cite{vatsal1} to study the variation of the $\lambda$-adic absolute value of of the algebraic part of $L(g,\chi,1)$ as a function of $\chi$, where $\lambda$ is a fixed prime of $\bar{\mathbb{Q}}$ with residue characteristic $l$.  The object of this paper is to generalize this work to totally real fields while removing most of the restrictions on $N$, $p$, $D$ and $l$ $($Theorem \ref{mainthm}$)$. We use the improved formalism developed in \cite{cor-vat} to achieve this purpose. 

We now give a brief account of the results in this work. Let $\pi$ be an irreducible automorphic representation of $GL_{2}$ over a totally real field $F$ corresponding to a cuspidal Hilbert modular newform $g$ of level $\mathcal{N}$, trivial character and parallel weight $(2,...,2)$. The Hecke eigenvalues of $g$ are denoted by $a_{v}$ $(T_{v}g=a_{v}g)$.  Let $K$ be a totally imaginary quadratic extension of $F$ with discriminant $\mathcal{D}$. We fix a prime ideal $\mathcal{P}$ of $F$ such that $\mathcal{P}$ lies over an odd rational prime $p$ with inertia degree one and consider ring class characters of $K$ of $\mathcal{P}$-power conductor which are trivial when restricted to $\mathbb{A}^{*}_{F}\subset\mathbb{A}^{*}_{K}$. We assume that the prime to $\mathcal{P}$ part of $\mathcal{N}$ is relatively prime to $\mathcal{D}$. We also impose sufficient conditions to make the sign in the functional equation of $L(\pi,\chi,s)$ equal $+1$ for all but finitely many characters $\chi$ of the type considered above.

Adapting the notation from \cite{cor-vat}, let $G(n)$ be the Galois group of the ring class field of conductor $\mathcal{P}^{n}$ over $K$, and let $G_{0}$ be the torsion subgroup of $G(\infty)=\varprojlim G(n)$. It is shown in \cite{cor-vat} that $G_{0}$ is finite and can be identified with its image $G_{0}(n)\subset G(n)$ if $n\gg0$, in which case we denote the quotient group $G(n)/G_{0}(n)$ by $H(n)$. The reciprocity map of $K$ maps $\mathbb{A}^{*}_{F}\subset\mathbb{A}^{*}_{K}$ onto a subgroup $G_{2}\simeq Pic(O_{F})$ of $G_{0}$. Using this reciprocity map, we shall identify ring class characters of $K$ of conductor $\mathcal{P}^{n}$ which are trivial on $\mathbb{A}^{*}_{F}$ with primitive characters of $G(n)$ which are trivial on $G_{2}$. Given a character $\chi_{0}$ of $G_{0}$ which is trivial on $G_{2}$ and a sufficiently large $n$, we consider all primitive characters $\chi$ of $G(n)$ of the form $\chi=\chi'_{0}\chi_{1}$, where $\chi'_{0}$ is a character of $G(n)$ inducing $\chi_{0}$ on $G_{0}(n)\simeq G_{0}$, and $\chi_{1}$ is a primitive character of $H(n)$ which we also require to be faithful (injective). We denote the set of all such characters by $P(n,\chi_{0})$ . 

It can be shown that $G(n)$ acts simply transitively on the set of CM points of conductor $\mathcal{P}^{n}$ on the Shimura curve associated to some carefully chosen totally definite quaternion algebra $B$.  Let $l$ be a rational prime, and denote by $E_{l}$ the $l$-adically complete discrete valuation ring containing the Hecke eigenvalues of $g$, $\lambda$ its maximal ideal and $E_{\lambda}$ the residue field $E_{l}/\lambda$.  Let us assume for now that $l\neq p$ to simplify the exposition of the introduction. Given a CM point $x$ of conductor $\mathcal{P}^{n}$ and a ring class character $\chi$ of the same conductor, we define the Gross-Zagier sum
$$\text{a}(x,\chi)=\frac{1}{|G_{2}|}\sum_{\sigma\in G(n)}\chi(\sigma)\psi(\sigma.x),$$ where $\psi$ is the $E_{l}$-valued function  introduced in Definition \ref{function}. We note here that $\psi$ is chosen to be unique up to a unit in $E_{l}$. We also note that $\psi$ is not necessarily the newform but rather the Gross-Prasad test vector for the trivial character as we remark in Section $4$. In the light of the existing Gross-Zagier formula $($see Section 4$)$, our job is reduced to studying the $l$-adic valuation of this sum. Before we describe the results we obtained in this direction, we state Vatsal's result in which $\pi$ corresponds to a weight 2 newform $g$ for $\Gamma_{0}(N)$ such that $N$, $p$, and $D$ (the discriminant of the imaginary quadratic extension $K/\mathbb{Q}$) are pairwise relatively prime. Write $N=N^{+}N^{-}$ where $N^{-}$ is divisible only by primes that are inert in $K$. We assume that $N^{-}$ is squarefree and divisible by an odd number of primes. This last assumption guarantees that the twisted $L$-function $L(\pi,\chi,s)$ has a functional equation with sign $+1$ for all anticyclotomic characters $\chi$ of $K$ with $p$-power conductor. 

 The algebraic part of $L(\pi,\chi,\frac{1}{2})$ is defined in \cite{vatsal2} to be the quantity $$L^{\mathrm{al}}(\pi,\chi,\frac{1}{2})=\frac{L(\pi,\chi,\frac{1}{2})}{\Omega_{\pi}^{\mathrm{can}}},$$ where $\Omega_{\pi}^{\mathrm{can}}$ is Hida's canonical period. The following Gross-Zagier formula, due to Zhang, was the point of departure in Vatsal's work: $$|\mathrm{a}(x,\chi)|^{2}=\frac{L^{\mathrm{al}}(\pi,\chi,\frac{1}{2})}{C_{\mathrm{csp}}}\cdotp C_{\chi}u^{2},$$ where $C_{\chi}=\sqrt{D}p^{n}$, $u$ is half the order of $O_{K}^{*}$ and $\mathrm{C}_{\mathrm{csp}}$ is a constant that measures the congruence between $g$ and some cusp forms of lower levels.

We now state Proposition 4.1 in \cite{vatsal2}
\begin{proposition}\label{propvatsal}
Let $\chi_{0}$ be a character of $G_{0}$ such that $\chi_{0}=1$ on $G_{2}$. Then, for all $n\gg0$, there exists a character $\chi\in P(n,\chi_{0})$ such that  \begin{equation}\label{equation0}\mathrm{ord}_{\lambda}(\mathrm{a}(x,\chi))\leq\mu-1,\end{equation} where $\mu$ is the smallest integer such that $a_{q}\not\equiv 1+q\mod\lambda^{\mu}$ for some $q\nmid pND$.\end{proposition}
Let us assume that the order of $\chi_{0}$ is prime to $p$ and that the Hecke field of $g$ is linearly disjoint from the field generated over $\mathbb{Q}$ by the $p^{\text{th}}$ roots of unity. In this case, we know from Theorem 2.11  in \cite{vatsal1} that $L(\pi,\chi,\frac{1}{2})\neq0$ for all $\chi\in P(n,\chi_{0})$ with $n$ sufficiently large. We remark that this statement differs slightly from the statement given in \cite{vatsal1} since the condition on the Hecke field of $g$ was overlooked there. Now let us also assume that $l$ splits completely in the field $\mathbb{Q}(\chi_{0})$ generated by the values of $\chi_{0}$ and that it is inert in the field $\mathbb{Q}(\mu_{p^{\infty}})$ generated by all $p$-power roots of unity. Under these assumptions, Vatsal then observed that the inequality in (\ref{equation0}) holds true for all $\chi\in P(n,\chi_{0})$. Hence, Vatsal arrived at the following theorem as a corollary to Proposition \ref{propvatsal}. This is Corollary 4.2 in \cite{vatsal2} .
\begin{theorem}\label{theoremvatsal}
If the assumptions above hold for $g$, $l$ and the order of $\chi_{0}$, then  \begin{equation}\label{equation1}\mathrm{ord}_{\lambda}\left(\frac{(L^{\mathrm{al}}(\pi,\chi,\frac{1}{2})}{\mathrm{C}_{\mathrm{csp}}}\cdotp C_{\chi}u^{2}\right)\leq{2(\mu-1)}\end{equation}for all $\chi\in P(n,\chi_{0})$, with $n\gg0$.
\end{theorem}
 It is important to remark that the constant $\mu-1$  in the above upper bounds is different than the constant used in \cite{vatsal2}. Moreover, inequalities (\ref{equation0}) and (\ref{equation1}) are mistakenly given as equalities in that article, the source of the mistake being an error made in the proof of Proposition 5.3 part $(2)$. We mention here that establishing the correct statements required us to use $\mu-1$ as opposed to $\mathrm{ord}_{\lambda}(C_{\mathrm{Eis}})$ which appears in Vatsal's work and is described as a constant that essentially measures the congruence between $g$ and the space of Eisenstein series.

Another point worth mentioning here is the following interesting result of Pollack and Weston. Let $\overline{\rho}_{f}$ denote the residual Galois representation associated to $g$ modulo $\lambda$. If we assume that $\overline{\rho}_{f}$ is irreducible and that the pair $(\overline{\rho}_{f},N^{-})$ satisfies hypothesis CR (see \cite{pw} page 2), then we have $$\mathrm{ord}_{\lambda}(L^{\mathrm{al}}(\pi,\chi,\frac{1}{2}) C_{\chi})=\mathrm{ord}_{\lambda}(\mathrm{C}_{\mathrm{csp}})=\sum_{q|N^{-}}t_{g}(q),$$ where the terms in the sum are local Tamagawa components at primes dividing $N^{-}$ (see Theorem 2.3 and Theorem 6.8 in \cite{pw}).

 Our goal in this article is to prove an analogue of Proposition \ref{propvatsal} and Theorem \ref{theoremvatsal} for a Hilbert modular form $g$ over a totally real field $F$, while removing the assumptions on $g$, $l$ and the order of $\chi_{0}$.

We may assume without loss of generality that $E_{l}$ contains the values of $\chi_{0}$ and the $p^{\mathrm{th}}$ roots of unity.
Let $\chi_{0}$ be a character of $G_{0}$ which is trivial on $G_{2}$ and let $\chi$ be any character in $P(n,\chi_{0})$ for $n\gg0$. One can express $\chi$ as $\chi=\chi'_{0}\chi_{1}$, where $\chi'_{0}$ is some character of $G(n)$ inducing $\chi_{0}$ on $G_{0}(n)\simeq G_{0}$, and $\chi_{1}$ is some primitive character of $H(n)$. We consider the trace of $\mathrm{a}(x,\chi)$ taken from $E_{\lambda}(\chi_{1})$ to $E_{\lambda}$: $$\mathrm{Tr}(\mathrm{a}(x,\chi))=\sum_{\sigma\in \mathrm{Gal}(E_{\lambda}(\chi_{1})/E_{\lambda})}\sigma(\mathrm{a}(x,\chi)).$$ This trace expression is different than the average expression $$\mathrm{b}(x,\chi_{0})=\sum_{\chi\in P(n,\chi_{0})}\mathrm{a}(x,\chi)$$ considered in the work of Vatsal and Cornut-Vatsal. In particular, given any $\chi\in P(n,\chi_{0})$, the non-vanishing of $\mathrm{Tr}(\mathrm{a}(x,\chi))$ implies that of $\mathrm{a}(x,\chi)$. After a series of reductions in Section 5, we arrive at the following result in Section 6.

\begin{theorem}
Let $F$ be a totally real number field, and let $K$ be a totally imaginary quadratic extension of $F$ with discriminant $\mathcal{D}$. Let $\pi$ be an automorphic irreducible representation of $GL_{2}$ over $F$ corresponding to an adelic Hilbert modular newform $g$ of level $\mathcal{N}$, trivial central character and parallel weight $(2,...,2)$. We fix a prime ideal $\mathcal{P}$ of $F$ which lies above an odd rational prime $p$ with inertia degree one. We assume that the tuple $(\pi,K,\mathcal{P})$ satisfies hypotheses (1)-(3) in Section 1. Let $l$ be a rational prime $(l\neq p)$. We denote by $E_{l}$ the $l$-adically complete discrete valuation ring containing the Hecke eigenvalues of $g$ and $\lambda$ its maximal ideal.

Fix a character $\chi_{0}$ of $G_{0}$ such that $\chi_{0}=1$ on $G_{2}$. Then, for any $\chi\in P(n,\chi_{0})$ and any CM point $x$ of conductor $\mathcal{P}^{n}$ with $n\gg0$, we have
$$\mathrm{ord}_{\lambda}\left(\mathrm{Tr}(\mathrm{a}(x,\chi))\right)\leq\mu-1,$$ where $\mu$ is precisely given in Section 6 Definition \ref{cst}.
\end{theorem}

\section{Preliminaries and Notations}
Let us first fix some notation. We write $\mathbb{A}$ $($resp. $\mathbb{A}_{f}$$)$ for the ring of adeles $($resp. finite adeles$)$ of $\mathbb{Q}$. Let $F$ be a totally real number field with $[F:\mathbb{Q}]=d$, and let $K$ be a totally imaginary quadratic extension of $F$ the discriminant of which we denote by $\mathcal{D}$. The ring of adeles of $F$ is $\mathbb{A}_{F}=\mathbb{A}\otimes_{\mathbb{Q}}F$, and the ring of finite adeles of $F$ is $\hat{F}=\mathbb{A}_{f}\otimes_{\mathbb{Q}}F$. Similarly, we write $\mathbb{A}_{K}$ $($resp. $\hat{K}$$)$ for the ring of adeles $($resp. finite adeles$)$ of $K$. We denote by $\hat{M}=M\otimes_{\mathbb{Z}}\hat{\mathbb{Z}}$ the profinite completion of a finitely generated $\mathbb{Z}$-module $M$.

We consider an adelic Hilbert modular newform $g$ of level $\mathcal{N}$, trivial central character and parallel weight $(2,...,2)$. 
We denote by $a_{v}$ the Hecke eigenvalues of $g$ $(T_{v}g=a_{v}g)$. Let $\pi$ be the automorphic irreducible representation of $GL_{2}$ over $F$ corresponding to $g$, and denote by $\pi_{v}$  the local component of $\pi$ at  $v$ for any finite place $v$ of $F$.

We fix a prime ideal $\mathcal{P}$ of $F$ which lies above an odd rational prime $p$ with ramification index $e$ and inertia degree 1, so that $|O_{F}/\mathcal{P}|=p$. Let $\varpi_{\mathcal{P}}$ be a uniformizer in $F_{\mathcal{P}}$. We consider finite-order Hecke characters $\chi$ of $K$ of conductor $\mathcal{P}^{n}$ with $n\geq0$.

The data of the previous paragraph is to remain fixed and the following hypotheses are assumed throughout this article:   \begin{enumerate}
\item{The representations $\pi$ and $\pi\otimes\eta$ are distinct, where $\eta$ is the quadratic character associated to the extension $K/F$. We say that the pair $(\pi,K)$ is non-exceptional.}
 \item{The prime to $\mathcal{P}$ part $\mathcal{N}'$ of $\mathcal{N}$ is relatively prime to the discriminant $\mathcal{D}$ of $K/F$}
 \item{Let $S$ be the set of all the Archimedean places of $F$, together with those finite places of $F$ which do not divide $\mathcal{P}$, are inert in $K$, and divide $\mathcal{N}$ to an odd power. We require $S$ to have an even cardinality}
 \item{We let $\chi$ vary through the collection of ring class characters of $\mathcal{P}$-power conductor which satisfy: $\chi$ is trivial when restricted to $\mathbb{A}^{*}_{F}\subset\mathbb{A}^{*}_{K}$.}
 \end{enumerate}
Recall that $L(\pi,\chi,s)$ is the Rankin-Selberg $L$-function associated to $\pi$ and $\pi(\chi)$, where $\pi(\chi)$ is the automorphic representation of $GL_{2}$ attached to $\chi$. It follows from the last condition that the sign in the functional equation of $L(\pi,\chi,s)$ is $+1$ for all but finitely many characters $\chi$ that satisfy condition $(4)$ $($see Lemma $1.1$ in \cite{cor-vat}$)$.

Let $l$ be any rational prime. Fix an embedding $\overline{\mathbb{Q}}\rightarrow\overline{\mathbb{Q}_{l}}$, and denote by $E$ the subalgebra of $\overline{\mathbb{Q}_{l}}$ generated by the images of the Hecke eigenvalues of $g$. Write $E_{l}$ for the integral closure of $E$ in its field of fractions and $\lambda$ for the maximal ideal in $E_{l}$. 

\section{CM Points and Galois Action}

Let $B$ be the totally definite quaternion algebra over $F$ such that Ram$(B)=S$. Let $G=\mathrm{Res}_{F/\mathbb{Q}}(B^{*})$ be the algebraic group over $\mathbb{Q}$ associated to $B^{*}$. Thus, the center of $G$ is $Z=\mathrm{Res}_{F/\mathbb{Q}}(F^{*})$. Since every place in $F$ that ramifies in $B$ is inert in $K$, there exists an $F$-embedding $K\hookrightarrow B$. After fixing such an embedding, the group $T=\mathrm{Res}_{F/\mathbb{Q}}(K^{*})$ can be viewed as a maximal sub-torus of $G$ defined over $\mathbb{Q}$. 

In what follows, we sketch the construction of an $O_{F}$-order $R$ of reduced discriminant $\mathcal{N}$ in $B$ following \cite{cor-vat} and \cite{zhang1}. Let $\mathcal{N}'$ be the prime to $\mathcal{P}$ part of $\mathcal{N}$, and write $\mathcal{N}=\mathcal{P}^{\delta}\mathcal{N}'$. Let $R_{0}$ be an Eichler order of level $\mathcal{P}^{\delta}$ in $B$. We choose $R_{0}$ such that the $O_{F}$-order $O=O_{K}\cap R_{0}$ has a $\mathcal{P}$-power conductor. Denote by $\mathcal{N}_{B}$ the discriminant of $B/F$, and let $\mathcal{M}_{K}$ be an ideal in $O_{K}$ which has relative norm $\mathcal{N}'/\mathcal{N}_{B}$. We may find such an ideal $\mathcal{M}_{K}$ as follows. For each prime $\mathfrak{P}$ dividing $\mathcal{N}'$, let $\mathfrak{P}_{K}$ be a prime of $O_{K}$ dividing $\mathfrak{P}$. If we put  $$\mathcal{M}=\prod_{\mathfrak{P}\mid\mathcal{N}_{B}}\mathfrak{P}_{K}^{[\mathrm{ord}_{\mathfrak{P}}(\mathcal{N})/2]}.\prod_{\mathfrak{P}\mid\frac{\mathcal{N}'}{\mathcal{N}_{B}}}\mathfrak{P}_{K}^{\mathrm{ord}_{\mathfrak{P}}(\mathcal{N})},$$then $$\mathcal{M}_{K}=\prod_{\mathfrak{P}}\mathfrak{P}_{K}^{\mathrm{ord}_{\mathfrak{P}}(\mathcal{M})}.$$ Finally, we obtain $R$ by the following formula: $$R=O+(O\cap\mathcal{M}_{K}).R_{0}.$$

In particular, $R_{\mathcal{P}}=R_{0,\mathcal{P}}$ is an Eichler order of level $\mathcal{P}^{\delta}$ in $B_{\mathcal{P}}\simeq M_{2}(F_{\mathcal{P}})$. Without loss of generality, we make the identification
\begin{equation}\label{order}
R_{\mathcal{P}}=h_{\mathcal{P}}M^{0}(\mathcal{P}^{\delta})h_{\mathcal{P}}^{-1} \text{ for some } h_{\mathcal{P}}\in GL_{2}(F_{\mathcal{P}}),
\end{equation} where $$M^{0}(\mathcal{P}^{\delta})=\left\{\left( \begin{array}{cc} a & b \\ c& d \end{array} \right)\in M_{2}(O_{F_{\mathcal{P}}}):b\equiv0\mod\mathcal{P}^{\delta}\right\}.$$ Hence, we also identify $R^{*}_{\mathcal{P}}$ with the subgroup $h_{\mathcal{P}}K^{0}(\mathcal{P}^{\delta})h_{\mathcal{P}}^{-1}$, where
$$K^{0}(\mathcal{P}^{\delta})=\left\{\left( \begin{array}{cc} a & b \\ c& d \end{array} \right)\in GL_{2}(O_{F_{\mathcal{P}}}):b\equiv0\mod\mathcal{P}^{\delta}\right\}.$$

 Define an open compact subgroup $H$ of $G(\mathbb{A}_{f})$ by $H=\hat{R}^{*}$. The subgroup $H$ is sometimes referred to as the level subgroup. This gives rise to the finite sets $$\mathrm{M}_{H}=G(\mathbb{Q})\backslash G(\mathbb{A}_{f})/H,$$ and $$\mathrm{N}_{H}=Z(\mathbb{Q})^{+}\backslash Z(\mathbb{A}_{f})/\mathrm{nrd}(H).$$ It also gives rise to the set of CM points $$\mathrm{CM}_{H}=T(\mathbb{Q})\backslash G(\mathbb{A}_{f})/H.$$
Notice that any function on $\mathrm{M}_{H}$ induces a function on $\mathrm{CM}_{H}$ via the obvious reduction map $\mathrm{red}:\mathrm{CM}_{H}\rightarrow\mathrm{M}_{H}$.

The action of $T(\mathbb{A}_{f})$ on $\mathrm{CM}_{H}$ by left multiplication in $G(\mathbb{A}_{f})$ factors through the reciprocity map $$\mathrm{ rec}_{K}:T(\mathbb{A}_{f})\rightarrow\mathrm{Gal}_{K}^{\mathrm{ab}}$$ and thus induces an action of Gal$_{K}^{\mathrm{ab}}$ on $\mathrm{CM}_{H}$. Hence, for $x=[g]\in \mathrm{CM}_{H}$ and $\sigma\in$Gal$_{K}^{\mathrm{ab}}$, we have $\sigma.x=[\beta g]$ where $\beta\in T(\mathbb{A}_{f})$ is such that $\mathrm{rec}_{K}(\beta)=\sigma$. 

Moreover, the reduced norm map on $G(\mathbb{A}_{f})$ induces the map $\mathrm{c}:\mathrm{M}_{H}\rightarrow\mathrm{N}_{H}$. Hence, the action of Gal$_{F}^{\mathrm{ab}}$ on $\mathrm{N}_{H}$ induces an action of $\mathrm{Gal}_{K}^{\mathrm{ab}}$ on $\mathrm{N}_{H}$. For $x=[z]\in \mathrm{N}_{H}$ and $\sigma\in$Gal$_{K}^{\mathrm{ab}}$, we have $\sigma.x=[\mathrm{nrd}(\beta) g]$ where $\beta\in T(\mathbb{A}_{f})$ is such that $\mathrm{rec}_{K}(\beta)=\sigma$. 

We now introduce the notion of a CM point with a $\mathcal{P}$-power conductor.

\begin{definition}
We say that $x=[g]\in\mathrm{CM}_{H}$ is a CM point of conductor $\mathcal{P}^{n}$ and write $x\in\mathrm{CM}_{H}(\mathcal{P}^{n})$ if $T(\mathbb{A}_{f})\cap gHg^{-1}=\hat{O}_{\mathcal{P}^{n}}^{*}$, where $O_{\mathcal{P}^{n}}\subset O_{K}$ is the $O_{F}$-order of conductor $\mathcal{P}^{n}$. 
\end{definition}

Choose $\alpha_{\mathcal{P}}\in O_{K_{\mathcal{P}}}$ such that $\{1,\alpha_{\mathcal{P}}\}$ is an $O_{F_{\mathcal{P}}}$-basis of $O_{K_{\mathcal{P}}}$. Since $O_{\mathcal{P}^{n},\mathcal{P}}=O_{F_{\mathcal{P}}}+\mathcal{P}^{n}O_{K_{\mathcal{P}}}$, the set $\{1,\varpi_{\mathcal{P}}^{n}\alpha_{\mathcal{P}}\}$ is an  $O_{F_{\mathcal{P}}}$-basis of $O_{\mathcal{P}^{n},\mathcal{P}}$. We fix the embedding $K_{\mathcal{P}}\hookrightarrow M_{2}(F_{\mathcal{P}})$ defined by $$a+b\alpha_{\mathcal{P}}\mapsto \left( \begin{array}{cc} a+b\mathrm{Tr}\alpha_{\mathcal{P}} & b \\ -b\mathrm{N}\alpha_{\mathcal{P}}& a \end{array} \right),$$where Tr and N denote the trace and norm maps. 

\begin{lemma}\label{cmpoint}
Consider $k_{\mathcal{P}}\in B_{\mathcal{P}}\simeq M_{2}(F_{\mathcal{P}})$ specified as: $$k_{\mathcal{P}}=\left( \begin{array}{cc} \varpi_{\mathcal{P}}^{n-\delta} & 0 \\ 0& 1 \end{array} \right).$$ Then, for $n$ large enough, the order $k_{\mathcal{P}}h_{\mathcal{P}}^{-1}R_{\mathcal{P}}h_{\mathcal{P}}k_{\mathcal{P}}^{-1}$ in $B_{\mathcal{P}}$ optimally contains the $O_{F_{\mathcal{P}}}$-order in $K_{\mathcal{P}}$ of conductor $\mathcal{P}^{n}$.\end{lemma}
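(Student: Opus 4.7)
The starting point is that, by (\ref{order}), $k_{\mathcal{P}} h_{\mathcal{P}}^{-1} R_{\mathcal{P}} h_{\mathcal{P}} k_{\mathcal{P}}^{-1} = k_{\mathcal{P}} M^{0}(\mathcal{P}^{\delta}) k_{\mathcal{P}}^{-1}$, so the problem reduces to a purely local calculation in $M_{2}(F_{\mathcal{P}})$ involving the standard Eichler order and a diagonal conjugation. A direct matrix computation gives
$$k_{\mathcal{P}} \begin{pmatrix} a & b \\ c & d \end{pmatrix} k_{\mathcal{P}}^{-1} = \begin{pmatrix} a & \varpi_{\mathcal{P}}^{n-\delta} b \\ \varpi_{\mathcal{P}}^{\delta-n} c & d \end{pmatrix},$$
so as $(a,b,c,d)$ runs over $O_{F,\mathcal{P}} \times \mathcal{P}^{\delta} \times O_{F,\mathcal{P}} \times O_{F,\mathcal{P}}$, the conjugated order $R'_{\mathcal{P}} := k_{\mathcal{P}} M^{0}(\mathcal{P}^{\delta}) k_{\mathcal{P}}^{-1}$ is exactly the set of matrices $\begin{pmatrix} a & b' \\ c' & d \end{pmatrix}$ with $a,d \in O_{F,\mathcal{P}}$, $b' \in \mathcal{P}^{n}$, and $c' \in \mathcal{P}^{\delta-n}$.

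Next, I would intersect $R'_{\mathcal{P}}$ with $K_{\mathcal{P}}$ through the fixed embedding. An element $a + b\alpha_{\mathcal{P}}$ is represented by the matrix with entries $(a + b\,\mathrm{Tr}\,\alpha_{\mathcal{P}},\, b,\, -b\,\mathrm{N}\alpha_{\mathcal{P}},\, a)$, and since $\alpha_{\mathcal{P}} \in O_{K,\mathcal{P}}$ both $\mathrm{Tr}\,\alpha_{\mathcal{P}}$ and $\mathrm{N}\alpha_{\mathcal{P}}$ lie in $O_{F,\mathcal{P}}$. Membership in $R'_{\mathcal{P}}$ therefore reduces to $a \in O_{F,\mathcal{P}}$, $b \in \mathcal{P}^{n}$, together with the auxiliary condition $-b\,\mathrm{N}\alpha_{\mathcal{P}} \in \mathcal{P}^{\delta-n}$ coming from the lower-left entry. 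The auxiliary condition follows automatically from $b \in \mathcal{P}^{n}$ provided $\mathcal{P}^{n} \subseteq \mathcal{P}^{\delta-n}$, i.e.\ provided $n \geq \delta/2$, which is precisely why the lemma requires $n$ large enough.

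For such $n$, the surviving conditions $a \in O_{F,\mathcal{P}}$ and $b \in \mathcal{P}^{n}$ are exactly the defining relations of the conductor-$\mathcal{P}^{n}$ order $O_{\mathcal{P}^{n},\mathcal{P}} = O_{F,\mathcal{P}} + \mathcal{P}^{n} \alpha_{\mathcal{P}} O_{F,\mathcal{P}}$. Hence $R'_{\mathcal{P}} \cap K_{\mathcal{P}} = O_{\mathcal{P}^{n},\mathcal{P}}$, which simultaneously delivers the containment and its optimality. There is no serious obstacle in this argument---it is essentially a piece of valuation bookkeeping---and the only subtle point worth flagging is that the lower-left entry imposes a genuinely restrictive condition for small $n$ and becomes vacuous only once $n \geq \delta/2$, which explains the qualitative hypothesis in the statement.
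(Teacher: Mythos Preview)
Your argument is correct and follows essentially the same route as the paper's proof: the paper conjugates an element $\tau=a+b\alpha_{\mathcal{P}}$ by $k_{\mathcal{P}}^{-1}$ and checks membership in $M^{0}(\mathcal{P}^{\delta})$, which is logically equivalent to your step of first describing $k_{\mathcal{P}}M^{0}(\mathcal{P}^{\delta})k_{\mathcal{P}}^{-1}$ and then intersecting with the image of $K_{\mathcal{P}}$. Your version is slightly more explicit in identifying the threshold $n\geq\delta/2$ at which the lower-left condition becomes vacuous, whereas the paper simply writes ``for all $n\gg 0$''.
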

\begin{proof}
Let $\tau=a+b\alpha_{\mathcal{P}}$ be any element in $K_{\mathcal{P}}$. We have $$k_{\mathcal{P}}^{-1}\tau k_{\mathcal{P}}=\left( \begin{array}{cc} a+b\mathrm{Tr}\alpha_{\mathcal{P}} & b\varpi_{\mathcal{P}}^{\delta-n} \\ -b\varpi_{\mathcal{P}}^{n-\delta}\mathrm{N}\alpha_{\mathcal{P}}& a \end{array} \right).$$ 

Then, for all $n\gg0$, we have $k_{\mathcal{P}}^{-1}\tau k_{\mathcal{P}}\in M^{0}(\mathcal{P}^{\delta})$ if and only if $b\in\mathcal{P}^{n}$. Hence, the order $k_{\mathcal{P}}h_{\mathcal{P}}^{-1}R_{\mathcal{P}}h_{\mathcal{P}}k_{\mathcal{P}}^{-1}$ in $B_{\mathcal{P}}$ optimally contains the $O_{F_{\mathcal{P}}}$-order in $K_{\mathcal{P}}$ of conductor $\mathcal{P}^{n}$.
\end{proof}
We choose $z=[g_{0}]\in \mathrm{CM}_{H}(\mathcal{P}^{n})$ such that the $\mathcal{P}^{\mathrm{th}}$-component $g_{0,\mathcal{P}}$ of $g_{0}$ is equal to $k_{\mathcal{P}}h_{\mathcal{P}}^{-1}$. This choice of a CM point proves useful in Section 4.

\section{Uniform Distribution of CM Points}
The CM points are uniformly distributed on the components of the Shimura curve associated to $B$. This was the most crucial idea behind Vatsal's proof of Mazur's conjecture for weight two modular forms over $\mathbb{Q}$. In this section, we recall a crucial result on the uniform distribution of CM points due to Cornut-Vatsal, which we use to prove our main theorem. To describe this result, we need to introduce some more notation. 

Let $K[\mathcal{P}^{n}]$ be the ring class field over $K$ of conductor $\mathcal{P}^{n}$. In other words, $K[\mathcal{P}^{n}]$ is the abelian extension of $K$ associated by class field theory to the subgroup $K^{*}K_{\infty}^{*}\hat {O}_{\mathcal{P}^{n}}^{*}$ of $\mathbb{A}_{K}^{*}$, where $K_{\infty}=K\otimes_{\mathbb{Q}}\mathbb{R}$. Let $G(n)$ denote the Galois group of  this extension. We have $$G(n)=\mathrm{Gal}(K[\mathcal{P}^{n}]/K)\simeq \mathbb{A}_{K}^{*}/(K^{*}K_{\infty}^{*}\hat{O}_{\mathcal{P}^{n}}^{*})$$ via the reciprocity map of $K$.

 Set $\displaystyle{K[\mathcal{P}^{\infty}]= \cup_{n\geq0}K[\mathcal{P}^{n}]}$, so that $G(\infty)=\mathrm{Gal}(K[\mathcal{P}^{\infty}]/K)$. The torsion subgroup of $G(\infty)$ is denoted by $G_{0}$. It is finite and $G(\infty)/G_{0}$ is a free $\mathbb{Z}_{p}$-module of rank $[F_{\mathcal{P}}:\mathbb{Q}_{p}]$. The reciprocity map of $K$ maps $\mathbb{A}_{F}^{*}\subset\mathbb{A}_{K}^{*}$ onto the subgroup $G_{2}\simeq Pic(O_{F})$ of $G_{0}.$ 
 
Let $G(\infty)'$ be the subgroup of $G(\infty)$ generated by the Frobeniuses of the primes of $K$ which are not above $\mathcal{P}$. Write $G_{1}=G_{0}\cap G(\infty)'$. Let $\mathcal{D}'$ be the square-free product of the primes $\mathcal{Q}\neq\mathcal{P}$ of $F$ which ramify in $K$. Then $G_{1}/G_{2}$ is an $\mathbb{F}_{2}$-vector space with basis
$$\{\sigma_{\mathcal{Q}} \mod G_{2}: \mathcal{Q}|\mathcal{D}'\},$$ where $\sigma_{\mathcal{Q}}=\text{Frob}_{\mathfrak{Q}}$ and $\mathfrak{Q}$ is the prime of $K$ above $\mathcal{Q}$.

Loosely speaking, the uniform distribution result in \cite{vatsal1} states the following. Let $p_{1}$ and $p_{2}$ be arbitrary double cosets in $\mathrm{M}_{H}$, and let $\sigma$ be an arbitrary nontrivial element of $G_{0}$ with $\sigma\notin G_{1}$. Then there exists a CM point $x\in \mathrm{CM}_{H}(\mathcal{P}^{n})$ such that $\mathrm{red}(x)=p_{1}$ and $\mathrm{red}(\sigma.x)=p_{2}$ whenever $n$ is sufficiently large. 

In what follows, we describe the result of Cornut and Vatsal which extends and refines Vatsal's theorem alluded to in the previous paragraph.

Let $\mathcal{R}$ be a set of representatives for $G_{0}/G_{1}$ containing 1. We have the following maps:
$$\mathrm{RED}:CM_{H}(\mathcal{P}^{\infty})\rightarrow M_{H}^{\mathcal{R}},\text{    }\text{     }\text{     }\text{   } x\mapsto (\mathrm{red}(\tau.x))_{\tau\in\mathcal{R}}$$
$$\mathrm{C}:M_{H}^{\mathcal{R}}\rightarrow N_{H}^{\mathcal{R}},\text{    }\text{     }\text{     }\text{   } \text{   }\text{  }(a_{\tau})_{\tau\in\mathcal{R}}\mapsto (c(a_{\tau}))_{\tau\in\mathcal{R}}$$ 
and the composite map
$$\mathrm{C}\circ \mathrm{RED}: CM_{H}(\mathcal{P}^{\infty})\rightarrow N_{H}^{\mathcal{R}},$$
which is $G(\infty)$-equivariant.

The following is the key theorem of Cornut-Vatsal as stated in \cite{cor-vat}. However, the reader is referred to \cite{cor-vat1} for a proof of this result.
\begin{theorem}[\cite{cor-vat}]\label{cv}
For all but finitely many $x\in \mathrm{CM}_{H}(\mathcal{P}^{\infty})$, $$\mathrm{RED}(G(\infty).x)=\mathrm{C}^{-1}(G(\infty).\text{C}\circ \mathrm{RED}(x))$$
\end{theorem}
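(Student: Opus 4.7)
The inclusion $\mathrm{RED}(G(\infty).x)\subseteq \mathrm{C}^{-1}(G(\infty).\mathrm{C}\circ\mathrm{RED}(x))$ is immediate from the $G(\infty)$-equivariance of $\mathrm{C}\circ\mathrm{RED}$, so the real content is the reverse inclusion. Write $\Gamma_{0}\subseteq G(\infty)$ for the kernel of the induced action on $N_{H}^{\mathcal{R}}$. After translating by a single $\sigma\in G(\infty)$, the problem reduces to showing that $\Gamma_{0}.x$ surjects onto the fiber $\mathrm{C}^{-1}(\mathrm{C}\circ\mathrm{RED}(x))\subseteq M_{H}^{\mathcal{R}}$.

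The plan for this reverse inclusion is to exploit that $\Gamma_{0}$ is $\mathcal{P}$-adically large. By class field theory, $\Gamma_{0}$ is, up to finite index, the image in $G(\infty)$ of the norm-one subtorus $T^{1}\subseteq T$; modulo torsion it has positive $\mathbb{Z}_{p}$-rank, and at the local place $\mathcal{P}$ it sits in $K_{\mathcal{P}}^{*}/F_{\mathcal{P}}^{*}\hookrightarrow B_{\mathcal{P}}^{*}/F_{\mathcal{P}}^{*}\simeq PGL_{2}(F_{\mathcal{P}})$ as a one-parameter subgroup that, after conjugation, becomes unipotent (or else anisotropic with unipotent orbit behavior, depending on the splitting of $\mathcal{P}$ in $K$). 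I would then invoke Ratner's theorem on closures of unipotent orbits in $\mathcal{P}$-adic Lie groups, the tool cited by the authors in the introduction, applied to the action of the local image of $\Gamma_{0}$ on the $\mathcal{P}$-adic component of $\mathrm{CM}_{H}$. Combined with strong approximation for the simply connected group $B^{1}=\ker(\mathrm{nrd})$ and the finiteness of $M_{H}$, this identifies the closure of $\mathrm{red}(\Gamma_{0}.x)$ with the full $\mathrm{c}$-fiber through $\mathrm{red}(x)$.

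To pass from a single coordinate $\tau=1$ to the product indexed by $\tau\in\mathcal{R}$, one applies the single-coordinate argument to each translate $\tau.x$. Since $\Gamma_{0}$ is abelian and commutes with the $G_{0}$-action through which the $\tau$'s act, and since the fiber of $\mathrm{C}$ decomposes as a product of fibers of $\mathrm{c}$ coordinatewise, coordinatewise surjectivity will lift to joint surjectivity, provided the equidistribution is genuinely joint. This joint version is the sharpening of Vatsal's one-step result \cite{vatsal1} supplied by \cite{cor-vat}. The finitely many exceptional $x$ are those CM points of too-small conductor for the $\Gamma_{0}$-orbit to have already filled its fiber at finite level.

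The main obstacle will be justifying that the closed subgroup produced by Ratner's theorem is precisely the full stabilizer of the fiber, and not a proper intermediate closed subgroup that would force a smaller orbit closure. Ruling this out requires a classification of closed subgroups of the relevant $\mathcal{P}$-adic Lie group containing the given one-parameter orbit, and is the technical heart of the argument. A secondary difficulty is translating the topological density statement into an exact equality of finite sets at finite level $n$; this uses level-compatibility and justifies the qualifier "for all but finitely many $x$."
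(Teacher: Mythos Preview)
The paper does not give its own proof of this theorem: immediately after stating it the author writes that it is taken from \cite{cor-vat} and refers the reader to \cite{cor-vat1} for a proof. So there is no in-paper argument to compare your proposal against.

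That said, your outline is broadly faithful to the Cornut--Vatsal strategy (strong approximation for $B^{1}$, $\mathcal{P}$-adic equidistribution via Ratner, and a joint version over the index set $\mathcal{R}$). One imprecision is worth flagging: the norm-one torus in $PGL_{2}(F_{\mathcal{P}})$ is \emph{not} conjugate to a unipotent subgroup, and saying it ``becomes unipotent after conjugation'' is false as stated. What actually happens---and you can see a shadow of it in this very paper, in the computation following Lemma~\ref{Z(n,m)}---is that the action of the pro-$p$ part of $G(\infty)$ on a CM point of large conductor, once one chooses the local coordinate $g_{0,\mathcal{P}}=k_{\mathcal{P}}h_{\mathcal{P}}^{-1}$, is intertwined with right translation by genuinely unipotent matrices $\alpha_{a,m}$. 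It is this reinterpretation, not a conjugacy of subgroups, that produces the unipotent flow to which Ratner's theorems apply. Your identification of the ``main obstacle'' (ruling out proper intermediate closed subgroups in the orbit-closure classification) is accurate and is indeed where most of the work in \cite{cor-vat1} lies; the non-exceptionality hypothesis on $(\pi,K)$ enters precisely here.
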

\section{A Special Value Formula}

An automorphic form of weight 2 on $G$ is a smooth $($=locally constant$)$ scalar-valued function $\theta$ on $G(\mathbb{Q})\backslash G(\mathbb{A}_{f})$. Adapting the notation from \cite{cor-vat}, we denote the space of all automorphic forms of weight 2 on $G$ by $\mathcal{S}_{2}$. The group $G(\mathbb{A}_{f})$ acts on $\mathcal{S}_{2}$ by right translation:$$(g.\phi)(x)=\phi(xg),\text{         }g\in G(\mathbb{A}_{f})\text{ }\text{and}\text{ }\phi\in \mathcal{S}_{2}.$$
Let $\pi'$ be the unique cuspidal automorphic representation on $B$ that is associated to $\pi$ by the Jacquet-Langlands correspondence $(\pi=\mathrm{JL}(\pi'))$, and let $\mathcal{S}_{2}(\pi')$ be the realization of $\pi'$ in $\mathcal{S}_{2}$. Let 
$$\mathcal{S}_{2}(\pi')^{H}=\{f:\text{M}_{H}\rightarrow E_{l}\}.$$
 We recall here that $H=\hat{R}^{*}$  and $R$ is the quaternion order described in Section 2. In particular, for each finite place $v$ of $F$, the order $R_{v}$ is of reduced discriminant $\mathcal{N}_{v}$ and optimally contains the order $O_{v}$, where $O$ is the order of $K$ of conductor $\mathcal{P}^{\delta}$. It follows from the local theory of newforms for $v$ split in $K$ and the Saito-Tunnel Theorem for $v$ non-split in $K$ that the space $\mathcal{S}_{2}(\pi')^{H}$ has rank one over $E_{l}$. 

\begin{definition}\label{function}
Let $\theta$ be a generator of $\mathcal{S}_{2}(\pi')^{H}$. Define $\psi:\mathrm{CM}_{H}\rightarrow E_{l}$ by $$\psi :=\theta\circ\mathrm{red}.$$
\end{definition}

Notice that by definition $\theta$ is non-zero modulo $\lambda$, and so it is unique up to a unit in $E_{l}$. We also make the following important remark.
\begin{remark}\label{remarknewform}
The function $\theta$ is not necessarily the newform in the usual sense but rather the Gross-Prasad test vector for the trivial character $($see Section $7.1$ in \cite{fmp}$)$. Notice that in Vatsal's setting, the assumptions made on $N^{+}$ and $N^{-}$ $($see page $2$$)$ allow for taking $\theta$ to be the newform. However, in a more general setting, like the one considered in this work, one should take $\theta$ to be the Gross-Prasad test vector.
\end{remark}

Let $\chi$ be a Hecke character of $K$, and consider the linear form $l_{\chi}: \mathcal{S}_{2}(\pi')\rightarrow\mathbb{C}$ defined by the period integral: 
\begin{equation}\label{haar}\displaystyle{ l_{\chi}(\phi)=\int_{Z(\mathbb{A})T(\mathbb{Q})\backslash T(\mathbb{A})} \chi(t)\phi(t)\, dt},\end{equation} where $dt$ is the choice of Haar measure used in \cite{fmp}.
In 1985, Waldspurger proved a fundamental theorem $($Th\'{e}or\`{e}me $2$ in  \cite{waldspurger}$)$ which implies that for a ring class character $\chi$ of $P$-power conductor satisfying $\chi=1$ on $\mathbb{A}_{F}^{*}$, we have
$$ L(\pi,\chi,\frac{1}{2})\neq0\text{     } \Leftrightarrow \text{     }\exists\phi\in\ \mathcal{S}_{2}(\pi'): l_{\chi}(\phi)\neq0.$$
However, the result of Waldspurger doesn't give an explicit formula for the special value $L(\pi,\chi,\frac{1}{2})$. Several authors have subsequently taken up the task of specifying a test vector $\phi$ on which to evaluate the linear functional $l_{\chi}$ and finding a formula for $L(\pi,\chi,\frac{1}{2})$ in terms of $l_{\chi}(\phi)$. Such results have numerous applications to arithmetic and Iwasawa theory. In the rest of this section, we briefly present some of this work in the framework of this article.

 In \cite{zhang}, Zhang has obtained a precise formula for $L(\pi,\chi,\frac{1}{2})$ under the assumption that $\mathcal{N}$, $\mathcal{P}^{n}$ and $\mathcal{D}$ are pairwise co-prime. A stronger result in this direction has been made in \cite{martin-whitehouse} using the relative trace formula of Jacquet and Chen \cite{jc}. Under the only assumption that $\mathcal{N}$ and $\mathcal{P}^{n}$ are co-prime, Martin and Whitehouse established an explicit formula for $L(\pi,\chi,\frac{1}{2})$ in terms of $l_{\chi}(\phi)$, where $\phi$ is a nice test vector extracted from the work of Gross and Prasad \cite{grossprasad}. The authors noted that the restriction on the conductors of $\pi$ and $\chi$ was not essential to their approach but was made only for the sake of the Gross-Prasad test vector. In \cite{fmp}, the authors succeeded in removing this restriction by constructing a suitable test vector when $n\geq1$ and $\mathcal{P}$ divides $\mathcal{N}$ under the following assumption:\begin{equation}\label{inertcondition}\text{if }\mathcal{P}\text{ is inert in }K,\text{ then }n\geq\delta.\end{equation}
 This allowed the authors to obtain a more general version of the main theorem in \cite{martin-whitehouse}. We mention here that although the main formula in \cite{fmp} applies to a more general setting than the one imposed in our present work, we only state it under the hypotheses fixed in Section 1. Before doing so, we need to introduce some notation.

Let $S(\pi)$ (resp. $S(\chi)$) be the set of finite places of $F$ where $\pi$ (resp. $\chi$) is ramified and $S_{2}(\pi)$ the set of places where $c(\pi_{v})\geq2$ ($c(\pi_{v})$ is the exponent of the conductor of $\pi_{v}$). Denote the absolute value of the discriminants of $F$ and $K$ by $\Delta_{F}$ and $\Delta_{K}$. 

\begin{theorem}[\cite{fmp}]\label{martin}
There exists a test vector $\theta_{\chi}\in \mathcal{S}_{2}(\pi')$ such that $$\frac{|l_{\chi}(\theta_{\chi})|^{2}}{(\theta_{\chi},\theta_{\chi})}=\frac{1}{2\pi^{d}p^{n}}\sqrt{\frac{\Delta_{F}}{\Delta_{K}}}L_{S(\chi)}(\eta,1)L_{S(\pi)\cup S(\chi)}(\eta,1)L_{S(\pi)\cap S(\chi)}(1_{F},1)\cdotp\frac{L^{S_{2}(\pi)}(\pi,\chi,\frac{1}{2})}{L^{S_{2}(\pi)}(\pi,Ad,1)}.$$ Here $(.,.)$ is the standard inner product on $\pi'$ with respect to the measure on $G(\mathbb{A})$ which is the product of the local Tamagawa measures multiplied by $L^{S(\pi)}(1_{F},2)$.
\end{theorem}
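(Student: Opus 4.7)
The plan is to prove Theorem \ref{martin} by the relative trace formula of Jacquet--Chen \cite{jc}, following the strategy of Martin--Whitehouse \cite{martin-whitehouse} and its refinement in \cite{fmp}. The relative trace formula provides a global identity comparing a distribution on the quaternionic side (whose spectral expansion involves the squared torus period $\lvert l_\chi(\theta_\chi)\rvert^2$) with one on $GL_2$ (whose spectral expansion involves $L(\pi,\chi,\tfrac{1}{2})$). Transferring across $\pi=\mathrm{JL}(\pi')$, the identity becomes an expression for $\lvert l_\chi(\theta_\chi)\rvert^2/(\theta_\chi,\theta_\chi)$ as a product of local integrals, which one evaluates place by place.

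At places $v$ where both $\pi_v$ and $\chi_v$ are unramified, the spherical vector works and the local computation recovers a Macdonald-type formula of Waldspurger. At finite places outside $S(\pi)\cap S(\chi)$ but inside $S(\pi)\cup S(\chi)$, the Gross--Prasad test vector \cite{grossprasad} in $\pi'_v$ yields the correct local factor; taken together, these contributions account for the products $L_{S(\chi)}(\eta,1)$, $L_{S(\pi)\cup S(\chi)}(\eta,1)$, and $L_{S(\pi)\cap S(\chi)}(1_F,1)$ appearing on the right-hand side. The archimedean factor $(2\pi^d)^{-1}$ together with the discriminant factor $\sqrt{\Delta_F/\Delta_K}$ comes from reconciling the normalization of local Tamagawa measures with the Haar measure used to define $l_\chi$ and the inner product.

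The decisive step is constructing a usable test vector $\theta_{\chi,\mathcal{P}}\in\pi'_\mathcal{P}$ at the one place where both $\pi_\mathcal{P}$ and $\chi_\mathcal{P}$ are simultaneously ramified, i.e.\ $\mathcal{P}\mid\mathcal{N}$ with $\delta\geq 1$ and $n\geq 1$. Guided by Lemma \ref{cmpoint}, I would take the local new vector of $\pi_\mathcal{P}$ and translate it by $k_\mathcal{P}h_\mathcal{P}^{-1}$, so that the order $O_{\mathcal{P}^n,\mathcal{P}}$ is optimally embedded into the stabilizer of the translated vector. The technical condition (\ref{inertcondition}) is precisely what ensures that this translate is a legitimate test vector on which the local linear form does not vanish identically.

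The hardest point will be the explicit evaluation of the local period integral at $\mathcal{P}$: one has to check that it contributes exactly the factor $q^{-n}$ (modulo the normalizations already absorbed into $L^{S_2(\pi)}(\pi,\mathrm{Ad},1)$) and that no extraneous local $L$-factor appears at $\mathcal{P}$. This reduces to a Bruhat-cell unfolding of the matrix coefficient of the chosen test vector against the image of $T(F_\mathcal{P})$, with the analysis branching on whether $\mathcal{P}$ is split, inert, or ramified in $K/F$. The ramified case is the most delicate, since that is where the constraint (\ref{inertcondition}) between $\delta$ and $n$ is essential and where the constructions of \cite{fmp} deliver the central calculation; once this local identity is verified, the product of all local contributions assembles into the global formula.
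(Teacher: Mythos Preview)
The paper does not prove Theorem~\ref{martin} at all: it is stated with attribution to \cite{fmp} and used as a black box, with the surrounding text merely explaining the history (Waldspurger, Zhang, Martin--Whitehouse, and finally \cite{fmp}) and describing how the test vector $\theta_\chi$ is built locally. So there is no ``paper's own proof'' to compare against; your proposal is instead a sketch of what the argument in \cite{fmp} and \cite{martin-whitehouse} looks like.

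As such a sketch, your outline is broadly faithful to the literature: the relative trace formula of Jacquet--Chen, local test vectors from Gross--Prasad away from $S(\pi)\cap S(\chi)$, and the new local vector at $\mathcal{P}$ from \cite{fmp}. One small slip: you write that the ramified case is where condition~(\ref{inertcondition}) is essential, but (\ref{inertcondition}) is a constraint only when $\mathcal{P}$ is \emph{inert} in $K$ (it reads ``if $\mathcal{P}$ is inert in $K$, then $n\geq\delta$''). The paper also notes that the translate by $k_\mathcal{P}$ in Lemma~\ref{cmpoint} is chosen to match Theorem~1.7 of \cite{fmp}, which is consistent with your description of the local test vector at $\mathcal{P}$. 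But to reiterate: within this paper the correct ``proof'' is simply the citation, and nothing more is expected.
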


Our next step is to establish a relation between the test vector $\theta_{\chi}$ and the form $\theta$ which obviously does not depend on $\chi$. In doing so, we produce a CM point $\text{\bf{x}}\in\mathrm{CM}_{H}(\mathcal{P}^{n})$. This step allows us to conveniently express $l_{\chi}(\theta_{\chi})$ in terms of the finite sum: $$\mathrm{a}(\text{\bf{x}},\chi)=\frac{1}{|G_{2}|}\sum_{\sigma\in G(n)}\chi(\sigma)\psi(\sigma.\text{\bf{x}}),$$ which is an essential ingredient in relating questions about the special values $L(\pi,\chi,\frac{1}{2})$ to the distribution of CM points. The basic idea behind this step is extracted from the work of Vatsal in \cite{msri}.

In what follows we briefly describe the local vectors $\theta_{\chi,v}$ (defined in Section 7.1 in \cite{fmp}) which yield the Gross-Prasad test vector $\theta_{\chi}$.

Let $v$ be a finite place of $F$, and set $\mathcal{N}_{v}=(\varpi_{v}^{c(\pi_{v})})$ where $\varpi_{v}$ is any uniformizing parameter in $F_{v}$. If $\pi_{v}$ is an unramified principal series representation, we let $R_{\chi,v}\subset B_{v}$ denote a maximal order which optimally contains the order of $K_{v}$ with the same conductor as $\chi_{v}$; such orders exist and are unique up to $K_{v}^{*}$ conjugacy by Proposition 3.2 in \cite{gross88}. It follows from \cite{grossprasad} that $R^{*}_{\chi,v}$ fixes a unique line in $\pi'_{v}$. We let $\theta_{\chi,v}$ be any nonzero vector which is right invariant under $R^{*}_{\chi,v}$. If $\pi_{v}$ is ramified, $\chi_{v}$ is unramified, and  $K_{v}$ is an unramified field extension of $F_{v}$, we let $R_{\chi,v}$ denote an order of reduced discriminant $\mathcal{N}_{v}$ in $B_{v}$ which contains $O_{K_{v}}$. That such orders exist and are unique up to conjugacy by $K_{v}^{*}$ is guaranteed by Proposition 3.4 in \cite{gross88}. Again, it is shown in \cite{grossprasad} that $R^{*}_{\chi,v}$ fixes a unique line in $\pi'_{v}$, and we let $\theta_{\chi,v}$ be any nonzero vector on this line. If $\pi_{v}$ is ramified, $\chi_{v}$ is unramified and $K_{v}/F_{v}$ is split, the vector $\theta_{\chi,v}$ is chosen as described in \cite{fmp}. When $\pi_{v}$ and $\chi_{v}$ are both ramified, the desired test vector is determined in the main local results of \cite{fmp} (Theorems 1.6, 1.7). Notice that, under the hypothesis of this paper, the place $v$ in this case corresponds to the prime ideal $\mathcal{P}$. In particular, $B_{v}$ is a matrix algebra and we have $\pi_{v}\simeq\pi'_{v}$. Assuming condition (\ref{inertcondition}), it is shown in \cite{fmp} that there exists an element $g_{v}\in GL_{2}(F_{v})$ and a subgroup $W_{v}\subset GL_{2}(O_{F_{v}})$ such that, for a non-zero $l_{v}\in \mathrm{Hom}_{K^{*}_{v}}(\pi_{v},\mathbb{C}(\chi_{v}))$, the subgroup $g_{v}W_{v}g_{v}^{-1}$ fixes a 1-dimensional subspace of $\pi_{v}$ consisting of test vectors for $l_{v}$. We let $\theta_{\chi,v}\in\pi'_{v}$ be the image under  $\pi_{v}\simeq\pi'_{v}$ of the unique (up to scalar multiples) vector fixed by this subgroup. We note that the determination of $g_{v}$ and $W_{v}$ depends on whether $K_{v}/F_{v}$ is split or inert, but either way the subgroup $g_{v}W_{v}g_{v}^{-1}$ corresponds to an order of reduced discriminant $\mathcal{N}_{v}$ which optimally contains the order of $K_{v}$ of the same conductor as $\chi_{v}$. We mention here that our choice of $k_{\mathcal{P}}$ in Lemma \ref{cmpoint} is motivated by Theorems 1.6  and 1.7 in \cite{fmp}.

Recall that $\theta$ is right invariant under $H=\hat{R}^{*}$ where for each finite place $v$ of $F$, the order $R_{v}\subset B_{v}$ is an order of reduced discriminant $\mathcal{N}_{v}$ which optimally contains the order $O_{v}$ of $K_{v}$ ($O\subset K$ is the $O_{F}$-order of $\mathcal{P}$-power conductor specified at the beginning of Section 2). Hence, there exists $b_{v}\in B^{*}_{v}$ such that $b_{v}R_{v}b_{v}^{-1}=R_{\chi,v}$. In other words, the $O_{F_{v}}$-order $K_{v}\cap b_{v}R_{v}b_{v}^{-1}$ has conductor equal to that of $\chi_{v}$ for every finite place $v$ of $F$ so that $\text{\bf{x}}=[b]\in\mathrm{CM}_{H}(\mathcal{P}^{n})$. It follows that  $\theta_{\chi}=b.\theta$ (up to scalar multiples). Since the quantity $\frac{|l_{\chi}(\theta_{\chi})|^{2}}{(\theta_{\chi},\theta_{\chi})}$ that appears in the special value formula (Theorem \ref{martin}) is invariant under scaling, we can assume without loss of generality that $\theta_{\chi}(z)=\theta(zb)$ for all $z\in G(\mathbb{A}_{f})$.

\begin{lemma}\label{finitesum}
There exists $\text{\bf{x}}\in\mathrm{CM}_{H}(\mathcal{P}^{n})$ such that $$l_{\chi}(\theta_{\chi})=\mu_{\chi}\mathrm{a}(\text{\bf{x}},\chi),$$ where $\mu_{\chi}$ is the volume of the image of $U_{\mathcal{P}^{n}}=\hat{O}_{\mathcal{P}^{n}}^{*}K_{\infty}^{*}$ in $Z(\mathbb{A})T(\mathbb{Q})\backslash T(\mathbb{A})$.
\end{lemma}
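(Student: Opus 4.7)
The plan is to unfold the period integral directly: take $x=[b]\in\mathrm{CM}_H(\mathcal{P}^n)$ to be the CM point singled out in the paragraph preceding the lemma, and use the normalization $\theta_\chi(z)=\theta(zb)$ already established. Because $B$ is totally definite and $\pi'$ has trivial central character, the form $\theta_\chi$ is constant in its archimedean argument, so the period integral reduces to
$$l_\chi(\theta_\chi)=\int_{Z(\mathbb{A})T(\mathbb{Q})\backslash T(\mathbb{A})}\chi(t)\,\theta(t_fb)\,dt,$$
where $t_f$ denotes the finite part of $t$.

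The core observation is that the integrand is right-invariant under $U_{\mathcal{P}^n}=\hat{O}_{\mathcal{P}^n}^*K_\infty^*$. For $\chi$ this is immediate: as a ring class character of conductor $\mathcal{P}^n$ trivial on $\mathbb{A}_F^*$, it is trivial on $\hat{O}_{\mathcal{P}^n}^*K_\infty^*$. For $t\mapsto\theta(t_fb)$, invariance under $K_\infty^*$ is vacuous, while invariance under $\hat{O}_{\mathcal{P}^n}^*$ uses the defining identity $T(\mathbb{A}_f)\cap bHb^{-1}=\hat{O}_{\mathcal{P}^n}^*$ for a CM point of conductor $\mathcal{P}^n$: for $u\in\hat{O}_{\mathcal{P}^n}^*$ one has $ub\in bH$, so $\theta(ub)=\theta(b)$, and the general case follows by translating. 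Since $T$ is abelian, the stabilizer of every point under the right action of $U_{\mathcal{P}^n}$ on $Z(\mathbb{A})T(\mathbb{Q})\backslash T(\mathbb{A})$ equals the constant subgroup $U_{\mathcal{P}^n}\cap Z(\mathbb{A})T(\mathbb{Q})$, so all orbits have the same volume, which is precisely $\mu_\chi$. Unfolding therefore yields
$$l_\chi(\theta_\chi)=\mu_\chi\sum_{[t]}\chi(t)\,\theta(t_fb),$$
where $[t]$ ranges over $T(\mathbb{A})/Z(\mathbb{A})T(\mathbb{Q})U_{\mathcal{P}^n}$.

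I would then identify this indexing set with $G(n)/G_2$ via the reciprocity map $\mathrm{rec}_K$, using that $G_2$ is by construction the image of $\mathbb{A}_F^*=Z(\mathbb{A}_f)$ in $G(n)$. Under this identification, for $\sigma=\mathrm{rec}_K(\beta)$ with $\beta\in T(\mathbb{A}_f)$ one has $\theta(\beta b)=\psi([\beta b])=\psi(\sigma.x)$ by the definition of the Galois action on $\mathrm{CM}_H$, giving
$$l_\chi(\theta_\chi)=\mu_\chi\sum_{\sigma\in G(n)/G_2}\chi(\sigma)\,\psi(\sigma.x).$$

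Finally, I would match the right-hand side with $\mathrm{a}(x,\chi)=|G_2|^{-1}\sum_{\sigma\in G(n)}\chi(\sigma)\psi(\sigma.x)$ by grouping its sum into $G_2$-cosets. Two facts do the work: $\chi$ is trivial on $G_2$ (since $\chi$ is trivial on $\mathbb{A}_F^*$), and $\psi$ is $G_2$-invariant (any $\tau\in G_2$ lifts to some $\beta\in Z(\mathbb{A}_f)$, and trivial central character gives $\psi(\tau.[g])=\theta(\beta g)=\theta(g)=\psi([g])$). The inner sum over each $G_2$-coset then contributes exactly $|G_2|$, cancelling the prefactor and producing $\sum_{\bar\sigma\in G(n)/G_2}\chi(\bar\sigma)\psi(\bar\sigma.x)$, which matches the unfolded expression and establishes $l_\chi(\theta_\chi)=\mu_\chi\mathrm{a}(x,\chi)$. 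The main conceptual step is the orbit/volume computation that produces $\mu_\chi$; the remaining work is routine bookkeeping via class field theory and the central character.
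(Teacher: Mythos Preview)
Your proof is correct and follows essentially the same approach as the paper: unfold the period integral using right invariance under $U_{\mathcal{P}^n}$, identify the resulting index set with $G(n)/G_2$ via the reciprocity map, and rewrite the sum as $\mathrm{a}(x,\chi)$. You have simply filled in more details than the paper's proof—in particular the explicit justification of the invariance, the equal-volume orbit observation, and the $G_2$-coset bookkeeping—where the paper records these as three displayed equalities with a one-line reference to the identification $Z(\mathbb{A})T(\mathbb{Q})\backslash T(\mathbb{A})/U_{\mathcal{P}^n}\simeq G(n)/G_2$.
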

\begin{proof}
Since $\theta_{\chi}$ is right invariant under the subgroup $\hat{R}_{\chi}^{*}$ where for each finite place $v$ of $F$, the order $R_{\chi,v}\subset B_{v}$ is an order of reduced discriminant $\mathcal{N}_{v}$ which optimally contains the order of $K_{v}$ with the same conductor as $\chi_{v}$, it follows that $l_{\chi}(\theta_{\chi})$ can be written as
\begin{align*}
l_{\chi}(\theta_{\chi})&=\mu_{\chi}\sum_{Z(\mathbb{A})T(\mathbb{Q})\backslash T(\mathbb{A})/U_{\mathcal{P}^{n}}}\chi(t)\theta_{\chi}(t)\\&=\mu_{\chi}\sum_{Z(\mathbb{A})T(\mathbb{Q})\backslash T(\mathbb{A})/U_{\mathcal{P}^{n}}}\chi(t)\theta(tb)\\&=\frac{\mu_{\chi}}{|G_{2}|}\sum_{\sigma\in G(n)}\chi(\sigma)\psi(\sigma.\text{\bf{x}}),
\end{align*}
where $\text{\bf{x}}$ is the class $[b]$ in $\mathrm{CM}_{H}$. The last equality follows from that fact that  $Z(\mathbb{A})T(\mathbb{Q})\backslash T(\mathbb{A})/U_{\mathcal{P}^{n}}$ can be identified with the quotient $G(n)/G_{2}$.
\end{proof}
Notice that $\mu_{\chi}=\text{vol}(U_{\mathcal{P}^{n}},dt)=\text{vol}(U_{K},dt)L_{\mathcal{P}}(\eta,1)p^{-n}$. We also have \begin{align*}\text{vol}(U_{K},dt)&=\dfrac{\text{vol}(Z(\mathbb{A})T(\mathbb{Q})\backslash T(\mathbb{A}),dt)}{|Z(\mathbb{A})T(\mathbb{Q})\backslash T(\mathbb{A})/U_{K}|}\\&=2L(\eta,1)\frac{h_{F}}{h_{K}}\\&=\dfrac{2(2\pi)^{d}}{w_{K}Q_{K}}\sqrt{\frac{\Delta_{F}}{\Delta_{K}}},\end{align*} where $w_{K}$ is the number of roots of unity in $K$, $Q_{K}$ is the Hasse unit index of $K$ and $h_{K}/h_{F}$ is the relative class number of $K$ with respect to its maximal totally real subfield $F$. Notice that the second equality follows from the normalization of the Haar measure used in (\ref{haar}) (see \cite{fmp} and Section 2.1 in \cite{feigonwhitehouse} for more details), and the last equality follows from the well-known relative class number formula (see Chapter 4 in \cite{washington}). We now arrive at the following formula:
 \begin{equation}\label{specialvalueformula}\frac{|\mathrm{a}(\text{\bf{x}},\chi)|^{2}}{(\theta,\theta)}=\frac{p^{n}Q_{K}^{2}w_{K}^{2}}{8\pi^{d}(2\pi)^{2d}}\sqrt{\frac{\Delta_{K}}{\Delta_{F}}}L_{S(\pi)\cap S(\chi)^{c}}(\eta,1)\cdotp L_{S(\pi)\cap S(\chi)}(1_{F},1)\frac{L^{S_{2}(\pi)}(\pi,\chi,\frac{1}{2})}{L^{S_{2}(\pi)}(\pi,Ad,1)}.\end{equation}

\begin{remark}
The quantity $\frac{|\mathrm{a}(x,\chi)|^{2}}{(\theta,\theta)}$ does not depend on the choice of $\theta$ which is in our case only unique up to a unit in $E_{l}$. 
\end{remark}
\begin{remark}
 Since $\mathrm{a}(\gamma.x,\chi)=\chi^{-1}(\gamma)\mathrm{a}(x,\chi)$ for any $\gamma\in G(n)$, and $G(n)$ acts simply and transitively on $\mathrm{CM}_{H}(\mathcal{P}^{n})$,  it suffices to study the $\lambda$-adic valuation of $\mathrm{a}(y,\chi)$ for any $y\in \mathrm{CM}_{H}(\mathcal{P}^{n})$.
\end{remark}
\section{Two Level Raising Steps}
 By class field theory, we identify ring class characters of $\mathcal{P}$-power conductor with finite-order characters of $G(\infty)$. The torsion subgroup $G_{0}$ of $G_{\infty}$ is finite, and we know by Lemma $2.8$ in \cite {cor-vat} that we can identify $G_{0}$ with its image $G_{0}(n)$ in $G(n)$ whenever $n$ is sufficiently large. We then denote the quotient group $G(n)/G_{0}(n)$ by $H(n)$. We fix a character $\chi_{0}$ of $G_{0}$ such that $\chi_{0}=1$ on $G_{2}$. We consider all primitive characters $\chi$ of $G(n)$ $($do not factor through $G(n-1)$$)$ of the form $\chi=\chi'_{0}\chi_{1}$, where $\chi'_{0}$ is a character of $G(n)$ inducing $\chi_{0}$ on $G_{0}(n)\simeq G_{0}$, and $\chi_{1}$ is a primitive character of $H(n)$ which we also require to be faithful (injective). We denote the set of all such characters by $P(n,\chi_{0})$ . 

Recall that $E_{l}$ is an $l$-adically complete discrete valuation ring containing the Fourier coefficients of $g$, and $\lambda$ is the maximal ideal in $E_{l}$. Enlarge $E_{l}$ to contain the values of $\chi_{0}$ and the $p^{\text{th}}$ roots of unity. 

Given $x=[g]\in CM_{H}(\mathcal{P}^{n})$ and a character $\chi\in P(n,\chi_{0})$, we define the Gross-Zagier sum
$$\text{a}(x,\chi)=\frac{1}{|G_{2}|}\sum_{\sigma\in G(n)}\chi(\sigma)\psi(\sigma.x).$$

 Denote by $E_{\lambda}$ the residue field $E_{l}/\lambda$ and let $E_{\lambda}(\chi_{1})$ be the field obtained by adjoining to $E_{\lambda}$ the values of $\chi_{1}$.  We consider the trace of $\mathrm{a}(x,\chi)$ taken from $E_{\lambda}(\chi_{1})$ to $E_{\lambda}$: $$\mathrm{Tr}(\mathrm{a}(x,\chi))=\sum_{\sigma\in \mathrm{Gal}(E_{\lambda}(\chi_{1})/E_{\lambda})}\sigma(\mathrm{a}(x,\chi)).$$ This trace expression is different than the average expression $$\mathrm{b}(x,\chi_{0})=\sum_{\chi\in P(n,\chi_{0})}\mathrm{a}(x,\chi)$$ considered in the work of Vatsal and Cornut-Vatsal. Nevertheless, the same approach is followed to study both expressions. Evidently, the non-vanishing of $\mathrm{b}(x,\chi_{0})$ implies only the non-vanishing of $a(x,\chi)$ for some $\chi\in P(n,\chi_{0})$. However, given any $\chi\in P(n,\chi_{0})$, the non-vanishing of $\mathrm{Tr}(\mathrm{a}(x,\chi))$ would then imply the non-vanishing of $\mathrm{a}(x,\chi)$.

Notice that  \begin{align*}\mathrm{Tr}(\mathrm{a}(x,\chi))&=\frac{1}{|G_{2}|}\mathrm{Tr}_{E_{\lambda}(\chi_{1})/E_{\lambda}}\sum_{\sigma\in G_{0}(n)}\sum_{\tau\in H(n)}\chi'_{0}(\sigma)\chi_{1}(\tau)\psi(\sigma\tau.x)\\&=\frac{1}{|G_{2}|}\sum_{\sigma\in G_{0}}\chi_{0}(\sigma)\sum_{\tau\in H(n)}\psi(\sigma\tau.x)\mathrm{Tr}_{E_{\lambda}(\chi_{1})/E_{\lambda}}\chi_{1}(\tau)\\&=\frac{[E_{\lambda}(\chi_{1}):E_{\lambda}]}{|G_{2}|}\sum_{\sigma\in G_{0}}\chi_{0}(\sigma)\sum_{\substack{\tau\in H(n)\\ \chi_{1}(\tau)\in E_{\lambda}}}\psi(\sigma\tau.x)\chi_{1}(\tau).\end{align*}
Let $p^o$ be the highest power of $p$ dividing the order of $\chi_{0}$, and set $r=\max\{1,o\}$. Notice that an element $\tau$ in $H(n)$ satisfies $\chi_{1}(\tau) \in E_{\lambda}$ if and only if the order of $\tau$ divides $p^{r}$. 

For $m\ge1$ and $n\gg0$, we put $Z(n,m):=\mathrm{ker}(H(n)\twoheadrightarrow H(n-m))$. The following lemma is crucial in the sequel.
\begin{lemma}\label{Z(n,m)}
For $n\gg0$, $Z(n,m)\simeq O_{F_{\mathcal{P}}}/\mathcal{P}^{m}$ as a group.
\end{lemma}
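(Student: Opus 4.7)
The strategy is to compute the kernel $K(n,m):=\ker(G(n)\twoheadrightarrow G(n-m))$ by class field theory, identify it as a group with $O_{F,\mathcal{P}}/\mathcal{P}^m$, and then show that for $n\gg 0$ this kernel injects into $H(n)$ with image exactly $Z(n,m)$.

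First I would use the idelic description $G(n)\simeq\mathbb{A}_K^{*}/(K^{*}K_{\infty}^{*}\hat{O}_{\mathcal{P}^n}^{*})\simeq K^{*}\backslash\hat{K}^{*}/\hat{O}_{\mathcal{P}^n}^{*}$. A second isomorphism theorem argument gives
$$K(n,m)\simeq \hat{O}_{\mathcal{P}^{n-m}}^{*}\big/\bigl(\hat{O}_{\mathcal{P}^n}^{*}\cdot O_{\mathcal{P}^{n-m}}^{*}\bigr),$$
where $O_{\mathcal{P}^{n-m}}^{*}=K^{*}\cap\hat{O}_{\mathcal{P}^{n-m}}^{*}$ denotes the global units of the order. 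Since $\hat{O}_{\mathcal{P}^n}^{*}$ and $\hat{O}_{\mathcal{P}^{n-m}}^{*}$ agree away from $\mathcal{P}$, the quotient $\hat{O}_{\mathcal{P}^{n-m}}^{*}/\hat{O}_{\mathcal{P}^n}^{*}$ reduces to the local quotient $O_{\mathcal{P}^{n-m},\mathcal{P}}^{*}/O_{\mathcal{P}^n,\mathcal{P}}^{*}$. Moreover, any element of $O_K^{*}\setminus O_F^{*}$, being a fixed element $a+b\alpha$ with $b\neq 0$, fails to lie in $O_{\mathcal{P}^{k},\mathcal{P}}$ for $k$ large; hence for $n\gg 0$ we have $O_{\mathcal{P}^{n-m}}^{*}=O_F^{*}\subset O_{F,\mathcal{P}}^{*}\subset O_{\mathcal{P}^n,\mathcal{P}}^{*}$, and the global unit contribution disappears. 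Thus $K(n,m)\simeq O_{\mathcal{P}^{n-m},\mathcal{P}}^{*}/O_{\mathcal{P}^n,\mathcal{P}}^{*}$.

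Next, I would evaluate this local quotient. For each $k\geq 1$, every unit of $O_{\mathcal{P}^k,\mathcal{P}}=O_{F,\mathcal{P}}+\mathcal{P}^k O_{K,\mathcal{P}}$ can be written as a unit of $O_{F,\mathcal{P}}$ times an element of $1+\mathcal{P}^k O_{K,\mathcal{P}}$, so $O_{\mathcal{P}^k,\mathcal{P}}^{*}=O_{F,\mathcal{P}}^{*}\cdot(1+\mathcal{P}^k O_{K,\mathcal{P}})$. This yields
$$O_{\mathcal{P}^{n-m},\mathcal{P}}^{*}/O_{\mathcal{P}^n,\mathcal{P}}^{*}\simeq (1+\mathcal{P}^{n-m}O_{K,\mathcal{P}})\big/\bigl((1+\mathcal{P}^n O_{K,\mathcal{P}})\cdot(1+\mathcal{P}^{n-m}O_{F,\mathcal{P}})\bigr).$$
For $n-m$ sufficiently large (beyond the ramification-dependent threshold $e/(p-1)$), the $p$-adic logarithm identifies this with the additive quotient $\mathcal{P}^{n-m}O_{K,\mathcal{P}}/(\mathcal{P}^n O_{K,\mathcal{P}}+\mathcal{P}^{n-m}O_{F,\mathcal{P}})$; scaling by $\varpi_{\mathcal{P}}^{-(n-m)}$ gives $O_{K,\mathcal{P}}/(\mathcal{P}^m O_{K,\mathcal{P}}+O_{F,\mathcal{P}})$. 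Writing $O_{K,\mathcal{P}}=O_{F,\mathcal{P}}\oplus O_{F,\mathcal{P}}\alpha_{\mathcal{P}}$ (valid in the split, inert, and ramified cases alike), this quotient collapses to $O_{F,\mathcal{P}}\alpha_{\mathcal{P}}/\mathcal{P}^m\alpha_{\mathcal{P}}\simeq O_{F,\mathcal{P}}/\mathcal{P}^m$.

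Finally, I would pass from $G(n)$ to $H(n)=G(n)/G_0(n)$. For $n\gg 0$ the torsion subgroup $G_0$ injects into both $G(n)$ and $G(n-m)$, and the restriction $G_0(n)\to G_0(n-m)$ is the identity; therefore $G_0(n)\cap K(n,m)=0$. Consequently the composite $K(n,m)\hookrightarrow G(n)\twoheadrightarrow H(n)$ is injective, and a quick diagram chase shows its image equals $Z(n,m)$. Combining this with the local computation yields $Z(n,m)\simeq O_{F,\mathcal{P}}/\mathcal{P}^m$.

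The main technical obstacle is the $p$-adic logarithm step, which requires $n-m$ large enough (depending on the ramification index of $\mathcal{P}$) for the log/exp to converge and be mutually inverse bijections; this is harmless since the conclusion is only claimed for $n\gg 0$. The only other subtlety is verifying the stabilization $O_{\mathcal{P}^{n-m}}^{*}=O_F^{*}$ for $n-m$ large, which follows from the finite generation of $O_K^{*}$ together with the fact that $O_{\mathcal{P}^k,\mathcal{P}}$ eventually excludes any fixed element of $O_K\setminus O_F$.
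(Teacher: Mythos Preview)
Your proof is correct and follows the same overall architecture as the paper's: reduce $Z(n,m)$ to $K(n,m)=\ker(G(n)\to G(n-m))$ using that $G_0$ injects into $G(n)$ and $G(n-m)$ for $n$ large; use the idelic description to identify $K(n,m)$ with $O^*_{\mathcal{P}^{n-m},\mathcal{P}}/O^*_{\mathcal{P}^n,\mathcal{P}}$ after the global unit contribution $O^*_{\mathcal{P}^{n-m}}=O_F^*$ is absorbed; and then compute this local quotient.

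The one genuine difference is in how you evaluate the local quotient. You invoke the $p$-adic logarithm to linearize $(1+\mathcal{P}^{n-m}O_{K,\mathcal{P}})/\bigl((1+\mathcal{P}^n O_{K,\mathcal{P}})(1+\mathcal{P}^{n-m}O_{F,\mathcal{P}})\bigr)$ and read off $O_{F,\mathcal{P}}/\mathcal{P}^m$ on the additive side. The paper instead writes down explicit coset representatives,
\[
O^*_{\mathcal{P}^{n-m},\mathcal{P}}/O^*_{\mathcal{P}^n,\mathcal{P}}=\{\,1+a\alpha_{\mathcal{P}}\varpi_{\mathcal{P}}^{\,n-m}\bmod O^*_{\mathcal{P}^n,\mathcal{P}}:a\in O_{F,\mathcal{P}}/\mathcal{P}^m\,\},
\]
and leaves the (easy, for $n\gg0$) verification that $a\mapsto 1+a\alpha_{\mathcal{P}}\varpi_{\mathcal{P}}^{\,n-m}$ is a group isomorphism implicit. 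The paper's route is more elementary---no convergence threshold for $\log/\exp$ enters---and, more to the point, this explicit parametrization $\tau_a\leftrightarrow\lambda_{a,\mathcal{P}}=1+a\alpha_{\mathcal{P}}\varpi_{\mathcal{P}}^{\,n-m}$ is precisely what gets used immediately afterward to compute $\psi(\tau_a.x)$ via matrix manipulations in $GL_2(F_{\mathcal{P}})$. Your logarithm argument certifies the abstract isomorphism cleanly, but you would still have to produce these explicit representatives to carry out the subsequent calculations in Section~5.
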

\begin{proof}
 If $n$ is suffieciently large, the natural quotient map $G(n)\twoheadrightarrow H(n)$ induces an isomorphism between $\mathrm{ker}(H(n)\twoheadrightarrow H(n-m))$ and $\mathrm{ker}(G(n)\twoheadrightarrow G(n-m))$. We also have an isomorphism between $\mathrm{ker}(G(n)\twoheadrightarrow G(n-m))$ and $$K^{*}\hat{O}^{*}_{\mathcal{P}^{n-m}}/K^{*}\hat{O}^{*}_{\mathcal{P}^{n}}\simeq O^{*}_{\mathcal{P}^{n-m},\mathcal{P}}/O^{*}_{\mathcal{P}^{n-m}}O^{*}_{\mathcal{P}^{n},\mathcal{P}},$$ induced by the reciprocity map. Notice that $O^{*}_{\mathcal{P}^{n-m}}=O^{*}_{F}$ is contained in $O^{*}_{\mathcal{P}^{n},\mathcal{P}}$ for sufficiently large $n$, so that $$\mathrm{ker}(G(n)\twoheadrightarrow G(n-m))\simeq O^{*}_{\mathcal{P}^{n-m},\mathcal{P}}/O^{*}_{\mathcal{P}^{n},\mathcal{P}}.$$ On the other hand, $$O^{*}_{\mathcal{P}^{n-m},\mathcal{P}}/O^{*}_{\mathcal{P}^{n},\mathcal{P}}=\{1+a\alpha_{\mathcal{P}}\varpi^{n-m}_{\mathcal{P}}\mod O^{*}_{\mathcal{P}^{n},\mathcal{P}}: a\in O_{F_{\mathcal{P}}}/\mathcal{P}^{m}\},$$
thus yielding the desired isomorphism.
\end{proof}
In what follows, we set $m=er$. Notice that for $n\gg0$, we have $Z(n,m)=\{\tau\in H_{n}: \chi_{1}(\tau)\in E_{\lambda}\}$. In fact, it is only for this part of the proof that we need to assume that the inertia degree of $\mathcal{P}$ over $p$ is 1. Hence, we get
 \begin{align*}\mathrm{Tr}(\mathrm{a}(x,\chi))&=\frac{[E_{\lambda}(\chi_{1}):E_{\lambda}]}{|G_{2}|}\sum_{\sigma\in G_{0}}\chi_{0}(\sigma)\sum_{\tau\in Z(n,m)}\chi_{1}(\tau)\psi(\sigma\tau.x)\\&=\frac{[E_{\lambda}(\chi_{1}):E_{\lambda}]}{|G_{2}|}\sum_{\sigma\in G_{0}}\chi_{0}(\sigma)\sum_{a\in O_{F_{\mathcal{P}}}/\mathcal{P}^{m}}\chi_{1}(\tau_{a})\psi(\sigma\tau_{a}.x)\end{align*}

In view of the preceeding lemma, each element $\tau_{a}\in Z(n,m)$ corresponds to the class of $\lambda_{a,\mathcal{P}}=1+a\alpha_{\mathcal{P}}\varpi^{n-m}_{\mathcal{P}}$ in $O^{*}_{\mathcal{P}^{n-m},\mathcal{P}}/O^{*}_{\mathcal{P}^{n},\mathcal{P}}$ via the reciprocity map. In the following computations, we identify $\lambda_{a,\mathcal{P}}$ with its image in $GL_{2}(O_{F_{\mathcal{P}}})$:
 $$\lambda_{a,\mathcal{P}}\mapsto\left( \begin{array}{cc} 1+a\varpi^{n-m}_{\mathcal{P}}\mathrm{Tr}\alpha_{\mathcal{P}} & a\varpi^{n-m}_{\mathcal{P}}\\ -a\varpi^{n-m}_{\mathcal{P}}\mathrm{N}\alpha_{\mathcal{P}} & 1 \end{array} \right).$$
We denote by $\lambda_{a}$ the image of $\lambda_{a,\mathcal{P}}$ in $T(\mathbb{A}_{f})$. We now analyze the term $\psi(\tau_{a}.x)$ that appears in the above expression for $\mathrm{Tr}(\mathrm{a}(x,\chi))$.

Let $x=[g]$ be any CM point in $\mathrm{CM}_{H}(\mathcal{P}^{n})$. There exists $\sigma\in G(n)$  such that $x=\sigma.z$ with $z=[g_{0}]$ being the CM point specified at the end of Section 2. By definition, $\sigma.z=[\mu g_{0}]$ where $\mu\in T(\mathbb{A}_{f})$ is such that $\mathrm{rec}_{K}(\mu)=\sigma$. It follows that if $x=[g]\in\mathrm{CM}(\mathcal{P}^{n})$, then 
\begin{equation}\label{xm}
g=\eta\mu g_{0}r \text{ for some }\eta\in T(\mathbb{Q})\text{ and }r\in H.
\end{equation} 
Hence, $$\psi(\tau_{a}.x)=\theta(\lambda_{a}g)=\theta(\mu\lambda_{a}g_{0}).$$
However, notice that 
\begin{align*}
\lambda_{a,\mathcal{P}}k_{\mathcal{P}}&=k_{\mathcal{P}}\left( \begin{array}{cc} 1+a\varpi^{n-m}_{\mathcal{P}}\mathrm{Tr}\alpha_{\mathcal{P}} & a\varpi^{\delta-m}_{\mathcal{P}}\\ -a\varpi^{2n-m-\delta}_{\mathcal{P}}\mathrm{N}\alpha_{\mathcal{P}} & 1 \end{array} \right)\\&=k_{\mathcal{P}}\left( \begin{array}{cc} 1 & a\varpi^{\delta-m}_{\mathcal{P}}\\0 & 1 \end{array} \right)\left( \begin{array}{cc} 1+a\varpi^{n-m}_{\mathcal{P}}\mathrm{Tr}\alpha_{\mathcal{P}}+a^{2}\varpi^{2n-2m}_{\mathcal{P}}\mathrm{N}\alpha_{\mathcal{P}} & 0\\ -a\varpi^{2n-m-\delta}_{\mathcal{P}}\mathrm{N}\alpha_{\mathcal{P}} & 1 \end{array} \right).
\end{align*}
For every $a\in O_{F_{\mathcal{P}}}$ and $m\geq1$, we put 
\begin{equation*}
\alpha_{a,m}=(1,1,...,\underbrace{h_{\mathcal{P}}\left( \begin{array}{cc} 1 & a\varpi^{\delta-m}_{\mathcal{P}} \\ 0& 1 \end{array} \right)h_{\mathcal{P}}^{-1}}_{\mathcal{P}^{\mathrm{th}}\mathrm{place}},...,1,1)
\end{equation*} so that $\theta(\mu\lambda_{a}g_{0})=\theta(\mu g_{0}\alpha_{a,m})$. Consequently,
\begin{equation}\label{thetam}
\psi(\tau_{a}.x)=\theta(gr^{-1}\alpha_{a,m}).
\end{equation}
We now introduce a level structure $H_{m}\subset H$ which agrees with $H$ outside $\mathcal{P}$ and corresponds to an $O_{F}$-order $R_{m}\subset B$. Let us first recall the definitions of a couple of specific open and compact subgroups of $GL_{2}(F_{\mathcal{P}})$. For an ideal $\mathcal{I}$ of $O_{F_{\mathcal{P}}}$, we have $$K_{1}(\mathcal{I})=\left\{\left( \begin{array}{cc} a & b \\ c& d \end{array} \right)\in GL_{2}(O_{F_{\mathcal{P}}}):c\equiv0,a\equiv d\mod\mathcal{I}\right\},$$ and 
$$K^{1}(\mathcal{I})=\left\{\left( \begin{array}{cc} a & b \\ c& d \end{array} \right)\in GL_{2}(O_{F_{\mathcal{P}}}):b\equiv0,a\equiv d\mod\mathcal{I}\right\}.$$ 
 We put 
$$H_{m,\mathcal{P}}=R^{*}_{m,\mathcal{P}} =
\begin{cases}
R^{*}_{\mathcal{P}}\cap h_{\mathcal{P}}K_{1}(\mathcal{P}^{2m-\delta})h_{\mathcal{P}}^{-1}, &  \text{if }\delta<m\\ 
R^{*}_{\mathcal{P}}\cap h_{\mathcal{P}}K_{1}(\mathcal{P}^{m})h_{\mathcal{P}}^{-1}, & \text{if }m\leq\delta<2m\\
R^{*}_{\mathcal{P}}\cap h_{\mathcal{P}}K^{1}(\mathcal{P}^{m})h_{\mathcal{P}}^{-1}, & \text{otherwise.}
\end{cases}$$

Define a function $\theta_{m}$ on $G(\mathbb{Q})\backslash G(\mathbb{A}_{f})$ by: $$\theta_{m}=\sum_{a\in O_{F_{\mathcal{P}}}/\mathcal{P}^{m}}\chi_{1}(\tau_{a})(\alpha_{a,m}.\theta).$$ 
 \begin{lemma}
The function $\theta_{m}$ has level $H_{m}=\hat{R}_{m}^{*}$.
\end{lemma}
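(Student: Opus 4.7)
\emph{Proof sketch.} The plan is to verify that $\theta_m$ is right-invariant under $H_m$ by localizing at $\mathcal{P}$. Since $\theta$ is right-invariant under $H=\widehat{R}^{*}$, and $\alpha_{a,m}$ is trivial at every place $v\neq\mathcal{P}$, the invariance of $\theta_m$ at such places is immediate. I therefore focus on showing that for any $h\in H_{m,\mathcal{P}}$ and any $a\in O_{F,\mathcal{P}}/\mathcal{P}^{m}$ there exist $a'\in O_{F,\mathcal{P}}/\mathcal{P}^{m}$ and $h'\in R_{\mathcal{P}}^{*}$ satisfying $h\,\alpha_{a,m,\mathcal{P}}=\alpha_{a',m,\mathcal{P}}\,h'$, with the resulting map $a\mapsto a'$ a bijection preserving the values $\chi_{1}(\tau_{a})$.

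Writing $h=h_{\mathcal{P}}kh_{\mathcal{P}}^{-1}$ with $k=\bigl(\begin{smallmatrix}\alpha&\beta\\ \gamma&d_{k}\end{smallmatrix}\bigr)$ and setting $u_{a}=\bigl(\begin{smallmatrix}1&a\varpi_{\mathcal{P}}^{\delta-m}\\ 0&1\end{smallmatrix}\bigr)$, the identity $ku_{a}=u_{a'}k'$ determines $k'$ uniquely; a direct multiplication shows that the $(1,2)$-entry of $k'$ is
\[
\beta' \;=\; \varpi_{\mathcal{P}}^{\delta-m}\bigl(\alpha a - a'(\gamma a\varpi_{\mathcal{P}}^{\delta-m}+d_{k})\bigr)+\beta,
\]
so, using $\beta\in\mathcal{P}^{\delta}$, the membership $k'\in K^{0}(\mathcal{P}^{\delta})$ is equivalent to the single congruence
\[
\alpha a \;\equiv\; a'\bigl(\gamma a\varpi_{\mathcal{P}}^{\delta-m}+d_{k}\bigr)\pmod{\mathcal{P}^{m}}.
\]
Next I would inspect the three branches of the definition of $H_{m,\mathcal{P}}$ and verify that each forces $\gamma\varpi_{\mathcal{P}}^{\delta-m}\in\mathcal{P}^{m}$ and $\alpha\equiv d_{k}\pmod{\mathcal{P}^{m}}$: in case~1 ($\delta<m$) the stronger conditions $\gamma\in\mathcal{P}^{2m-\delta}$ and $\alpha\equiv d_{k}\pmod{\mathcal{P}^{2m-\delta}}$ are used together with the inequality $2m-\delta>m$; in case~2 ($m\le\delta<2m$) the conditions $\gamma\in\mathcal{P}^{m}$ and $\alpha\equiv d_{k}\pmod{\mathcal{P}^{m}}$ already suffice since $\delta\ge m$; and in case~3 ($\delta\ge 2m$) one has $\varpi_{\mathcal{P}}^{\delta-m}\in\mathcal{P}^{m}$ directly, while $\alpha\equiv d_{k}\pmod{\mathcal{P}^{m}}$ comes from $k\in K^{1}(\mathcal{P}^{m})$. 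In every case the congruence collapses to $a'\equiv a\pmod{\mathcal{P}^{m}}$, so one may take $a'=a$.

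With $a'=a$ the asserted bijection is the identity and $\chi_{1}(\tau_{a})=\chi_{1}(\tau_{a'})$ trivially. At the $\mathcal{P}$-place $h'=h_{\mathcal{P}}k'h_{\mathcal{P}}^{-1}\in R_{\mathcal{P}}^{*}=H_{\mathcal{P}}$ by (\ref{order}), and elsewhere $h'$ equals $h\in H$ since $H_{m}$ agrees with $H$ off $\mathcal{P}$; so $h'\in H$. Using the right-$H$-invariance of $\theta$ one finally obtains
\[
\theta_m(xh)=\sum_{a}\chi_{1}(\tau_{a})\theta(xh\alpha_{a,m})=\sum_{a}\chi_{1}(\tau_{a})\theta(x\alpha_{a,m}h')=\sum_{a}\chi_{1}(\tau_{a})\theta(x\alpha_{a,m})=\theta_m(x).
\]
The main obstacle is the bookkeeping in the case analysis; the essential point is that the three-piece definition of $H_{m,\mathcal{P}}$ is engineered precisely so that all ``error'' terms die modulo $\mathcal{P}^{m}$, no further structural input being required beyond the $H$-invariance of $\theta$.
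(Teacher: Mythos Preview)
Your proof is correct and follows essentially the same approach as the paper. The paper produces an explicit factorization giving $a'=wz^{-1}a$ (in your notation $\alpha d_k^{-1}a$) and then reindexes using $w\equiv z\pmod{\mathcal{P}^m}$, whereas you solve the membership congruence directly and take $a'=a$; since $wz^{-1}\equiv 1\pmod{\mathcal{P}^m}$ these are the same computation, just organized slightly differently, with your case analysis making explicit what the paper's line ``Since $\gamma_{\mathcal{P}}\in H_{m,\mathcal{P}}$, we know that\dots'' leaves to the reader.
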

\begin{proof}
We need to prove that $\gamma.\theta_{m}=\theta_{m}$ for all $\gamma\in H_{m}$. Let $\gamma$ be any element in $H_{m}$, and write $$\gamma_{\mathcal{P}}=h_{\mathcal{P}}\left( \begin{array}{cc} w & x \\ y& z \end{array} \right)h_{\mathcal{P}}^{-1}.$$ In particular, we have $w\equiv z\mod \mathcal{P}^{m}$.
Notice that \begin{align*}\left( \begin{array}{cc} w & x \\ y& z \end{array} \right)\left( \begin{array}{cc} 1 & a\varpi^{\delta-m}_{\mathcal{P}} \\ 0& 1 \end{array} \right)&=\left( \begin{array}{cc} 1 &xz^{-1} \\ yw^{-1}& 1 \end{array} \right)\left( \begin{array}{cc} 1 & wz^{-1}a\varpi^{\delta-m}_{\mathcal{P}} \\ 0& 1 \end{array} \right)\left( \begin{array}{cc} w & 0 \\ 0& z \end{array} \right) \\&= \left( \begin{array}{cc} 1 & wz^{-1}a\varpi^{\delta-m}_{\mathcal{P}} \\ 0& 1 \end{array} \right)\left( \begin{array}{cc} 1-yz^{-1}a\varpi^{\delta-m}_{\mathcal{P}} & xz^{-1}+wz^{-2}ya^{2}\varpi^{2\delta-2m}_{\mathcal{P}} \\ yw^{-1}& 1+yz^{-1}a\varpi^{\delta-m}_{\mathcal{P}} \end{array}\right)\\&\times\left( \begin{array}{cc} w & 0 \\ 0& z \end{array} \right).
\end{align*}
Since $\gamma_{\mathcal{P}}\in H_{m,\mathcal{P}}$, we know that $$h_{\mathcal{P}}\left( \begin{array}{cc} 1-yz^{-1}a\varpi^{\delta-m}_{\mathcal{P}} & xz^{-1}+wz^{-2}ya^{2}\varpi^{2\delta-2m}_{\mathcal{P}} \\ yw^{-1}& 1+yz^{-1}a\varpi^{\delta-m}_{\mathcal{P}} \end{array} \right)\left( \begin{array}{cc} w & 0 \\ 0& z \end{array} \right)h_{\mathcal{P}}^{-1} \in R^{*}_{\mathcal{P}}.$$ Hence, we get 
\begin{align*}
\gamma.\theta_{m}&=\sum_{a\in O_{F_{\mathcal{P}}}/\mathcal{P}^{m}}\chi_{1}(\tau_{a})(\alpha_{wz^{-1}a,m}.\theta)\\&=\sum_{a\in O_{F_{\mathcal{P}}}/\mathcal{P}^{m}}\chi_{1}(\tau_{azw^{-1}})(\alpha_{a,m}.\theta)\\&=\sum_{a\in O_{F_{\mathcal{P}}}/\mathcal{P}^{m}}\chi_{1}(\tau_{a})(\alpha_{a,m}.\theta)=\theta_{m}.
\end{align*} 
\end{proof}
 
\begin{proposition}
Let $\psi_{m}$ be the function induced by $\theta_{m}$ on $CM_{H_{m}}$. If $n$ is sufficiently large, then for every $x\in \mathrm{CM}_{H}(\mathcal{P}^{n})$, there exists $x_{m}\in\mathrm{CM}_{H}(\mathcal{P}^{n})$ such that \begin{equation}\label{psim}
\psi_{m}(x_{m})=\sum_{\tau\in Z(n,m)}\chi_{1}(\tau)\psi(\tau.x).\end{equation}
\end{proposition}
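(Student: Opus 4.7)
The plan is to produce $x_m$ as the class of $g r^{-1}$, where $g$ is a representative of $x$ and $r\in H$ comes from the decomposition (\ref{xm}) of $g$ as $\eta\mu g_0 r$; once this point is in hand, $\psi_m(x_m)$ unwinds by definition to exactly the sum on the right-hand side. Concretely, applying (\ref{xm}) we choose $\eta\in T(\mathbb{Q})$, $\mu\in T(\mathbb{A}_{f})$ and $r\in H$ with $g=\eta\mu g_{0}r$, set $\widetilde{g}=gr^{-1}=\eta\mu g_{0}$, and let $x_m$ be the class of $\widetilde{g}$ in $\mathrm{CM}_{H_{m}}$ (viewed, via a chosen representative, as an element of $\mathrm{CM}_{H}(\mathcal{P}^{n})$ as the statement requires). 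The class is well-defined modulo $T(\mathbb{Q})$, and $\widetilde{g}$ lies in the $G(\infty)$-orbit of the base point $z$.

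The computational heart is short. By the definition $\theta_{m}=\sum_{a}\chi_{1}(\tau_{a})(\alpha_{a,m}.\theta)$ and of $\psi_{m}$ as induced by $\theta_{m}$, we get
\[
\psi_{m}(x_{m})=\theta_{m}(\widetilde{g})=\sum_{a\in O_{F,\mathcal{P}}/\mathcal{P}^{m}}\chi_{1}(\tau_{a})\,\theta\!\left(g r^{-1}\alpha_{a,m}\right).
\]
Each summand equals $\chi_{1}(\tau_{a})\psi(\tau_{a}.x)$ by the identity (\ref{thetam}) established earlier in the section. Finally, by Lemma \ref{Z(n,m)} the assignment $a\mapsto\tau_{a}$ is a bijection $O_{F,\mathcal{P}}/\mathcal{P}^{m}\xrightarrow{\sim}Z(n,m)$, so re-indexing delivers
\[
\psi_{m}(x_{m})=\sum_{\tau\in Z(n,m)}\chi_{1}(\tau)\psi(\tau.x),
\]
which is (\ref{psim}).

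What remains is to confirm that $x_m$ has conductor $\mathcal{P}^{n}$ with respect to the finer level structure $H_m$, i.e.\ that $T(\mathbb{A}_{f})\cap \widetilde{g}H_{m}\widetilde{g}^{-1}=\widehat{O_{\mathcal{P}^{n}}}^{*}$. Outside $\mathcal{P}$ one has $H_{m}=H$ and $\widetilde{g}$ differs from the CM representative $g$ only by the element $r\in H$, so the local intersection there coincides with the one for $x\in\mathrm{CM}_{H}(\mathcal{P}^{n})$. At $\mathcal{P}$, since $\mu_{\mathcal{P}}\in K_{\mathcal{P}}^{*}$ normalizes $T(\mathbb{A}_{f})_{\mathcal{P}}$, the problem reduces to computing $T(\mathbb{A}_{f})_{\mathcal{P}}\cap g_{0,\mathcal{P}}H_{m,\mathcal{P}}g_{0,\mathcal{P}}^{-1}$. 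Using $g_{0,\mathcal{P}}=k_{\mathcal{P}}h_{\mathcal{P}}^{-1}$ and Lemma \ref{cmpoint}, this amounts to asking which $\tau\in K_{\mathcal{P}}^{*}$ satisfy $k_{\mathcal{P}}^{-1}\tau k_{\mathcal{P}}\in K^{0}(\mathcal{P}^{\delta})\cap K_{\bullet}$, with $K_{\bullet}\in\{K_{1}(\mathcal{P}^{2m-\delta}),K_{1}(\mathcal{P}^{m}),K^{1}(\mathcal{P}^{m})\}$ according to the three cases defining $H_{m,\mathcal{P}}$.

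The main obstacle is precisely this last local conductor check, since one must rule out — in each of the three cases — any extra congruence forced by the $K_{1}$ or $K^{1}$ condition. Writing $\tau=a+b\varpi_{\mathcal{P}}^{n}\alpha_{\mathcal{P}}$ as in Lemma \ref{cmpoint}, the entries of $k_{\mathcal{P}}^{-1}\tau k_{\mathcal{P}}$ contain factors $\varpi_{\mathcal{P}}^{2n-\delta}$ (lower-left), $\varpi_{\mathcal{P}}^{\delta}$ (upper-right), and $\varpi_{\mathcal{P}}^{n}$ (diagonal difference). One verifies that for $n$ sufficiently large (essentially $n\geq 2m$) each of these valuations meets the threshold imposed by the relevant $K_{\bullet}$, so no additional constraint on $\tau$ arises and the conductor of $x_m$ remains $\mathcal{P}^{n}$. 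The rest of the argument is routine book-keeping.
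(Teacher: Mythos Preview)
Your argument is correct and follows the paper's proof essentially verbatim: define $x_m=[gr^{-1}]$ using the decomposition (\ref{xm}), unwind $\theta_m$ at this point via (\ref{thetam}), and re-index using Lemma~\ref{Z(n,m)}. The only difference is that you actually carry out the local conductor check at $\mathcal{P}$ (correctly, via the entry-by-entry valuation estimate), whereas the paper simply declares it ``easy to check''; note also that the target $\mathrm{CM}_{H}(\mathcal{P}^{n})$ in the statement is a typo for $\mathrm{CM}_{H_{m}}(\mathcal{P}^{n})$, as the paper's own proof confirms.
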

\begin{proof}
Consider some $x=[g]\in\mathrm{CM}_{H}(\mathcal{P}^{n})$ with $g\in G(\mathbb{A}_{f})$ and $n\geq m$. Let $x_{m}$ be the class of $gr^{-1}$ in $\mathrm{CM}_{H_{m}}$, where $r$ is as specified in (\ref{xm}). It is easy to check that the CM point $x_{m}=[gr^{-1}]\in\mathrm{CM}_{H_{m}}$ has conductor $\mathcal{P}^{n}$.

To prove that formula (\ref{psim}) holds for all $x\in \mathrm{CM}_{H}(\mathcal{P}^{n})$ with $n\gg0$, we simply have to observe that
\begin{align*}
\psi_{m}(x_{m})&=\theta_{m}(gr^{-1})\\&=\sum_{a\in O_{F_{\mathcal{P}}}/\mathcal{P}^{m}}\chi_{1}(\tau_{a})\theta(gr^{-1}\alpha_{a,m})\\&=\sum_{a\in O_{F_{\mathcal{P}}}/\mathcal{P}^{m}}\chi_{1}(\tau_{a})\psi(\tau_{a}.x)\\&=\sum_{\tau\in Z(n,m)}\chi_{1}(\tau)\psi(\tau.x),
\end{align*}
 where the third equation follows trivially from (\ref{thetam}).
\end{proof}

Since $\chi_{0}=1$ on $G_{2}$, we have:  $$\mathrm{Tr}(\mathrm{a}(x,\chi))=[E_{\lambda}(\chi_{1}):E_{\lambda}]\sum_{\sigma\in G_{0}/G_{2}}\chi_{0}(\sigma)\psi_{m}(\sigma.x_{m}).$$
We can reduce the above sum into something even simpler by means of another level raising step.
\begin{proposition}
There exists an $O_{F}$-order $R_{m,\mathcal{D}}$, a non-zero function $\theta_{m,\mathcal{D}}$ of level $H_{m,\mathcal{D}}=\hat{R}_{m,\mathcal{D}}^{*}$ on $G(\mathbb{A}_{f})$, and for each $n\geq0$, a Galois equivariant map $x_{m}\mapsto x_{m,\mathcal{D}}$ from $CM_{H_{m}}(\mathcal{P}^{n})$ to $CM_{H_{m,\mathcal{D}}}(\mathcal{P}^{n})$ such that $$\psi_{m,\mathcal{D}}(x_{m,\mathcal{D}})=\sum_{\tau\in G_{1}/G_{2}}\chi_{0}(\tau)\psi_{m}(\tau.x_{m}),$$ where $\psi_{m,\mathcal{D}}=\theta_{m,\mathcal{D}}\circ\mathrm{red}$.
\end{proposition}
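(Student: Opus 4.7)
The plan is to follow the template of the preceding level-raising proposition, with the set $G_{1}/G_{2}$ replacing $Z(n,m)$ and the ramified primes $\mathcal{Q}\mid\mathcal{D}'$ replacing $\mathcal{P}$. First I exploit the $\mathbb{F}_{2}$-vector space structure of $G_{1}/G_{2}$ with basis $\{\bar\sigma_{\mathcal{Q}}:\mathcal{Q}\mid\mathcal{D}'\}$ recalled in Section 3. Writing each $\tau\in G_{1}/G_{2}$ uniquely as $\tau=\prod_{\mathcal{Q}\in S}\sigma_{\mathcal{Q}}$ for a subset $S\subseteq\{\mathcal{Q}\mid\mathcal{D}'\}$ and using the multiplicativity of $\chi_{0}$, the target sum factors as
$$\sum_{\tau\in G_{1}/G_{2}}\chi_{0}(\tau)\psi_{m}(\tau.x_{m})=\prod_{\mathcal{Q}\mid\mathcal{D}'}(1+\chi_{0}(\sigma_{\mathcal{Q}})[\sigma_{\mathcal{Q}}]).\psi_{m}(x_{m}),$$
where $[\sigma_{\mathcal{Q}}]$ denotes the pullback operator $f\mapsto f(\sigma_{\mathcal{Q}}.\,\cdot\,)$ and the operators at distinct $\mathcal{Q}$ commute because $G(\infty)$ is abelian.

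Next I realize each $[\sigma_{\mathcal{Q}}]$ as a right translation on functions. For $\mathcal{Q}\mid\mathcal{D}'$, hypothesis (2) gives $\mathcal{Q}\nmid\mathcal{N}'$, and since $\mathcal{Q}$ ramifies in $K$ it does not lie in $S=\mathrm{Ram}(B)$, so $R_{m,\mathcal{Q}}=R_{\mathcal{Q}}$ is a maximal order of $B_{\mathcal{Q}}\simeq M_{2}(F_{\mathcal{Q}})$ optimally containing $O_{K,\mathcal{Q}}$. Let $\mathfrak{Q}$ be the unique prime of $K$ above $\mathcal{Q}$ with uniformizer $\varpi_{\mathfrak{Q}}$, and let $\beta_{\mathcal{Q}}\in T(\mathbb{A}_{f})$ be the idele equal to $\varpi_{\mathfrak{Q}}$ at $\mathcal{Q}$ and $1$ elsewhere; then $\mathrm{rec}_{K}(\beta_{\mathcal{Q}})=\sigma_{\mathcal{Q}}$ and $\psi_{m}(\sigma_{\mathcal{Q}}.x_{m})=\theta_{m}(\beta_{\mathcal{Q}}g_{m})$ for $x_{m}=[g_{m}]$. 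By the same trick used in the preceding proposition (adjusting the representative of the CM point on the right, as in the passage from $x$ to $x_{m}=[gr^{-1}]$), after replacing $g_{m}$ by a suitable $g_{m}'$ in the same class one can realize simultaneously
$$\theta_{m}(\beta_{\mathcal{Q}}g_{m}')=(\alpha_{\mathcal{Q}}.\theta_{m})(g_{m}')\qquad\text{for all }\mathcal{Q}\mid\mathcal{D}',$$
for appropriate local elements $\alpha_{\mathcal{Q}}\in B_{\mathcal{Q}}^{*}\hookrightarrow G(\mathbb{A}_{f})$ (concretely, $\alpha_{\mathcal{Q}}$ can be taken in $K_{\mathcal{Q}}^{*}$ acting by right multiplication via the chosen embedding $K\hookrightarrow B$).

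With these ingredients in hand, define
$$\theta_{m,\mathcal{D}}=\prod_{\mathcal{Q}\mid\mathcal{D}'}(1+\chi_{0}(\sigma_{\mathcal{Q}})\alpha_{\mathcal{Q}}).\theta_{m},$$
where the product is taken inside the group algebra of $G(\mathbb{A}_{f})$ acting on $S_{2}$ by right translation. This function is invariant under the open compact subgroup
$$H_{m,\mathcal{D},v}=\begin{cases}H_{m,v}&v\nmid\mathcal{D}',\\ R_{\mathcal{Q}}^{*}\cap\alpha_{\mathcal{Q}}^{-1}R_{\mathcal{Q}}^{*}\alpha_{\mathcal{Q}}&v=\mathcal{Q}\mid\mathcal{D}',\end{cases}$$
which equals $\widehat{R_{m,\mathcal{D}}}^{*}$ for an explicit $O_{F}$-order $R_{m,\mathcal{D}}\subset B$. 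Because $\alpha_{\mathcal{Q}}\in K_{\mathcal{Q}}^{*}$ normalizes $O_{K,\mathcal{Q}}^{*}$, the local subgroup $H_{m,\mathcal{D},\mathcal{Q}}$ still optimally contains $O_{K,\mathcal{Q}}$, so the map $x_{m}\mapsto x_{m,\mathcal{D}}:=[g_{m}']$ sends $CM_{H_{m}}(\mathcal{P}^{n})$ into $CM_{H_{m,\mathcal{D}}}(\mathcal{P}^{n})$. Galois equivariance is automatic because the Galois action on CM points of $\mathcal{P}^{n}$-conductor is through left multiplication by $T(\mathbb{A}_{f})$ supported at $\mathcal{P}$, which commutes with the right-translation modifications at the primes $\mathcal{Q}\mid\mathcal{D}'$. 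Finally, expanding the product defining $\theta_{m,\mathcal{D}}$ and evaluating at $g_{m}'$ recovers precisely $\sum_{\tau\in G_{1}/G_{2}}\chi_{0}(\tau)\psi_{m}(\tau.x_{m})$.

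The hard part will be the simultaneous conversion in the second step: one needs to choose a single representative $g_{m}'$ of $x_{m}$ for which left translation by each $\beta_{\mathcal{Q}}$ can be traded for right translation by $\alpha_{\mathcal{Q}}$ modulo $G(\mathbb{Q})$, with the corresponding global element absorbed into $G(\mathbb{Q})$-invariance and the CM conductor preserved. This is the analog of the bookkeeping that led to $x_{m}=[gr^{-1}]$ above, and once it is in place the rest of the argument is a formal expansion.
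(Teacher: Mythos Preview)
Your construction is exactly the one the paper defers to (Lemma~5.9 of Cornut--Vatsal) and makes explicit later in the proof that $\theta_{m,\mathcal{D}}$ is nonzero: there $\theta_{m,\mathcal{D}}=\sum_{d\mid\mathcal{D}'}\chi_{0}(\sigma_{d})(\alpha_{d}.\theta_{m})$ with $\alpha_{\mathcal{Q}}\in R_{\mathcal{Q}}$ of reduced norm a uniformizer in $O_{F,\mathcal{Q}}$ and $R_{m,\mathcal{D},\mathcal{Q}}=R_{\mathcal{Q}}\cap\alpha_{\mathcal{Q}}R_{\mathcal{Q}}\alpha_{\mathcal{Q}}^{-1}$, which is precisely your product $\prod_{\mathcal{Q}\mid\mathcal{D}'}(1+\chi_{0}(\sigma_{\mathcal{Q}})\alpha_{\mathcal{Q}})$ expanded out. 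Your choice $\alpha_{\mathcal{Q}}=\varpi_{\mathfrak{Q}}\in K_{\mathcal{Q}}^{*}\subset B_{\mathcal{Q}}^{*}$ is a legitimate instance (since $\mathcal{Q}$ ramifies in $K$, its reduced norm is a uniformizer of $F_{\mathcal{Q}}$), and the ``hard part'' you flag is dispatched simply by normalizing the representative $g_{m}$ to be $1$ at each $\mathcal{Q}\mid\mathcal{D}'$, which is possible because the conductor is a $\mathcal{P}$-power.
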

\begin{proof}
The reader is referred to the proof of Lemma 5.9 in \cite{cor-vat} 
\end{proof}
Hence, the trace expression simplifies to $$\mathrm{Tr}(\mathrm{a}(x,\chi))=[E_{\lambda}(\chi_{1}):E_{\lambda}]\sum_{\sigma\in G_{0}/G_{1}}\chi_{0}(\sigma)\psi_{m,\mathcal{D}}(\sigma.x_{m,\mathcal{D}}).$$

\section{The $\lambda$-adic Valuation of $\mathrm{a}(x,\chi)$}
We now study the $\lambda$-adic valuation of the sum $$\sum_{\sigma\in \mathcal{R}}\chi_{0}(\sigma)\psi_{m,\mathcal{D}}(\sigma.x_{m,\mathcal{D}}),$$ where  $\mathcal{R}$ is a set of representatives for $G_{0}/G_{1}$ containing 1.
\begin{definition}
Let $k$ be any ring. A $k$-valued function $\phi$ on $\mathrm{M}_{H}$ is said to be Eisenstein if it factors through $\mathrm{N}_{H}$ via the map $\mathrm{c}$, where as $\phi$ is said to be exceptional if there exists $z\in\mathrm{N}_{H}$ such that $\phi$ is constant on $c^{-1}(\sigma.z)$ for all $\sigma\in\mathrm{Gal}^{\mathrm{ab}}_{K}$.
\end{definition}
Choose an ideal $\mathcal{C}$ in $O_{F}$ such that $\mathrm{nrd}(H)$ contains all elements of $\hat{O}_{F}^{*}$ congruent to $1$ modulo $\mathcal{C}$. Such an integral ideal exists because $\mathrm{nrd}(H)$ is open in $\hat{F}^{*}$. For a finite prime $v$ of $F$, let $q_{v}$ be the cardinality of the residue class field at $v$. Denote by $S$ the set of all finite places of $F$ that do not divide $\mathcal{N}$ and correspond to a principal prime ideal $aO_{F}$ with $a\equiv1\mod C$ and $a$ is totally positive.
\begin{lemma}\label{eis}
Let $\phi$ be an $E_{l}$-valued function on $\mathrm{M}_{H}$ such that $\phi(x)$ is a unit for some $x\in \mathrm{M}_{H}$. If $\phi$ is Eisenstein modulo $\lambda^{r}$ for some positive integer $r$, then $a_{v}\equiv q_{v}+1\mod\lambda^{r}$ for all $v\in S$. 
\end{lemma}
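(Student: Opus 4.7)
The plan is to evaluate the Hecke operator $T_v$ at a prime $v \in S$ in two different ways on $\phi$, which in the applications (to $\theta$ and the level-raised forms $\theta_m$, $\theta_{m,\mathcal{D}}$ of Section 5) is implicitly a Hecke eigenform with $T_v\phi = a_v\phi$ for every $v\nmid\mathcal{N}$. On the one hand this gives $T_v\phi(x) = a_v\phi(x)$. On the other hand, I will show that the Eisenstein hypothesis forces $T_v\phi(x) \equiv (q_v+1)\phi(x) \pmod{\lambda^r}$. Combining and evaluating at the $x$ where $\phi(x)$ is a unit then yields $a_v \equiv q_v + 1 \pmod{\lambda^r}$.

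For the second computation, I would use the standard unramified double coset decomposition at $v$: since $v\nmid\mathcal{N}$, the local order $R_v$ is maximal, so $T_v$ is realized by $q_v+1$ single-coset representatives $\alpha_i\in GL_2(F_v)$, each of reduced norm $\varpi_v$. Thus $T_v\phi(x) = \sum_{i=0}^{q_v}\phi(x\alpha_i)$, and the image $c(x\alpha_i)$ in $\mathrm{N}_H$ is independent of $i$, equal to $c(x)\cdot\varpi_v$ where $\varpi_v$ denotes the idele supported at $v$. Writing $\phi \equiv \bar\phi\circ c \pmod{\lambda^r}$ for some $\bar\phi$ on $\mathrm{N}_H$, we get $T_v\phi(x) \equiv (q_v+1)\bar\phi(c(x)\varpi_v) \pmod{\lambda^r}$.

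The crux, which I expect to be the only delicate point, is to show that this idele $\varpi_v$ is trivial in $\mathrm{N}_H = Z(\mathbb{Q})^+\backslash Z(\mathbb{A}_f)/\mathrm{nrd}(H)$; once this is established, $\bar\phi(c(x)\varpi_v) \equiv \bar\phi(c(x)) \equiv \phi(x) \pmod{\lambda^r}$ and we are done. For $v\in S$, write $v = aO_F$ with $a$ totally positive and $a \equiv 1 \pmod{\mathcal{C}}$. Under the diagonal embedding $a\in F^*$ lies in $Z(\mathbb{Q})^+$, and it differs from $\varpi_v$ by the idele $u$ whose component at every finite place $w\neq v$ is $a$ and whose component at $v$ is $1$. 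Since $v = aO_F$ means $a$ is a unit at every $w\neq v$, we have $u \in \widehat{O_F}^*$; and $a\equiv 1\pmod{\mathcal{C}}$ globally forces $u \equiv 1 \pmod{\mathcal{C}\widehat{O_F}}$. By the defining property of $\mathcal{C}$ this places $u$ in $\mathrm{nrd}(H)$, so $\varpi_v = a\cdot u^{-1} \in Z(\mathbb{Q})^+\cdot\mathrm{nrd}(H)$ is trivial in $\mathrm{N}_H$. This is precisely why $S$ was cut out by the totally-positive and $\equiv 1\pmod{\mathcal{C}}$ conditions on the generator; the remainder of the argument is routine Hecke-operator bookkeeping.
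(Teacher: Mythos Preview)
Your proposal is correct and follows essentially the same route as the paper's proof: both compute $T_v\phi(x)$ via the double coset decomposition, use the Eisenstein hypothesis to collapse the $q_v+1$ terms to $(q_v+1)\bar\phi(c(x)\varpi_v)$, and then show that the idele $\varpi_v$ is trivial in $\mathrm{N}_H$ by writing it as the diagonal totally positive element $a\in Z(\mathbb{Q})^+$ times an idele in $\widehat{O_F}^*$ congruent to $1$ modulo $\mathcal{C}$ (the paper phrases this last step by choosing $d\in G(\mathbb{Q})$ with $\mathrm{nrd}(d)=a$ and $h\in H$ with $\mathrm{nrd}(h)=\mathrm{nrd}(\eta_v^{-1}d)$, but it is the same computation). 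You are also right that the eigenform property $T_v\phi=a_v\phi$ is being assumed implicitly; the paper's proof does the same without comment.
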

\begin{proof}
Let $v$ be a finite place in $F$ corresponding to a principal prime ideal $Q=aO_{F}$ with $a\equiv 1\mod C$ and $a$ is totally positive. Choose $x=[g]\in\mathrm{M}_{H}$ such that $\phi(x)$ is a $\lambda$-adic unit. By definition, we know that $$T_{v}\phi(x)=\sum_{i\in I_{v}}\phi([g\eta_{v,i}]).$$
Here $H_{v}\left( \begin{array}{cc} a & 0 \\ 0& 1 \end{array} \right)H_{v}=\coprod_{i\in I_{v}}\eta_{v,i}H_{v}$. Notice that $c([g\eta_{v,i}])=c([g\eta_{v}])$, where $$\eta_{v}=(1,..,1,\left( \begin{array}{cc} a & 0 \\ 0& 1 \end{array} \right),1,...,1).$$
Since $\phi$ is Eisenstein modulo $\lambda^{r}$, we get 
$$T_{v}\phi(x)\equiv(1+q_{v})\phi([g\eta_{v}])\mod\lambda^{r}.$$ 
Choose $d\in G(\mathbb{Q})$ such that $\mathrm{nrd}(d)=a$. Notice that $\mathrm{nrd}(\eta_{v}^{-1}d)=(a,...,a,1,a,...,a)\equiv 1\mod C$. We thus obtain an element $h\in H$ such that $\mathrm{nrd}(h)=\mathrm{nrd}(\eta_{v}^{-1}d)$. Hence, $$\phi([g\eta_{v}])\equiv\phi([g\eta_{v}d^{-1}])\equiv\phi([gh^{-1}])\equiv\phi(x)\mod\lambda^{r}.$$ On the other hand, we know that $T_{v}\phi(x)=a_{v}\phi(x)$. Putting all of this together gives $a_{v}\phi(x)\equiv(1+q_{v})\phi(x)\mod\lambda^{r}$, which implies that $a_{v}\equiv 1+q_{v}\mod\lambda^{r}$ since $\phi(x)$ is a $\lambda$-adic unit.
\end{proof}
\begin{lemma}
If $\phi$ is exceptional but non-Eisenstein modulo $\lambda^{r}$ for some positive integer $r$, then $a_{v}$ is a $\lambda$-adic non-unit for all finite places $v$ of $F$ that are inert in $K$ and do not divide $\mathcal{N}$.
\end{lemma}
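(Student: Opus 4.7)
The plan is to argue the contrapositive: suppose $v$ is a finite place of $F$ inert in $K$, not dividing $\mathcal{N}$, and suppose $a_{v}$ is a $\lambda$-adic unit. I will sketch how these assumptions force $\phi$ to be Eisenstein modulo $\lambda^{r}$, contradicting the hypothesis. Fix $z \in \mathrm{N}_{H}$ as supplied by the exceptional hypothesis, so that $\phi$ is constant modulo $\lambda^{r}$ on every fiber $c^{-1}(\sigma.z)$, $\sigma \in \mathrm{Gal}_{K}^{\mathrm{ab}}$. Using the assumption that $\phi$ takes a $\lambda$-adic unit value somewhere on $\mathrm{M}_{H}$, pick $y_{0} = [g_{0}] \in \mathrm{M}_{H}$ with $\phi(y_{0})$ a $\lambda$-adic unit; after adjusting $g_{0}$ within its $H$-coset (possible since $v \nmid \mathcal{N}$), we may take $g_{0,v} = 1$.

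Next, the Hecke eigenvalue relation $T_{v}\phi = a_{v}\phi$ evaluated at $y_{0}$ reads
$$a_{v}\,\phi(y_{0}) \;=\; \sum_{i=0}^{q_{v}} \phi\bigl([g_{0}\eta_{v,i}]\bigr),$$
where $\eta_{v,0} = \mathrm{diag}(\varpi_{v},1)$ and $\eta_{v,i} = \left(\begin{smallmatrix} 1 & u_{i}\\ 0 & \varpi_{v}\end{smallmatrix}\right)$ for lifts $u_{i}$ of the residue classes in $O_{F,v}/\varpi_{v}$. All $q_{v}+1$ summands on the right lie in the single fiber $c^{-1}(c(y_{0}) \cdot \varpi_{v})$. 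The crucial local fact is that $v$ is inert in $K$, so $\varpi_{v}$ has odd valuation at $v$ and is \emph{not} a local norm from $K_{v}^{*}$; thus $c(y_{0}) \cdot \varpi_{v}$ is not in the $\mathrm{Gal}_{K}^{\mathrm{ab}}$-orbit of $c(y_{0})$, and exceptionality gives no immediate grip on $\phi\bigl([g_{0}\eta_{v,i}]\bigr)$.

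To proceed I would use the embedding $K_{v} \hookrightarrow B_{v} \simeq M_{2}(F_{v})$ (available since $v \nmid \mathcal{N}_{B}$) to examine how $O_{K,v}^{*}$, acting via left multiplication through the CM structure, permutes the cosets $\{\eta_{v,i}\,H_{v}\}$: a direct stabilizer computation, using that $K_{v}/F_{v}$ is unramified quadratic, shows that the $q_{v}+1$ cosets fall into orbits whose images in $\mathrm{M}_{H}$ cluster into a single $\mathrm{Gal}_{K}^{\mathrm{ab}}$-translate of $y_{0}$ together with a distinguished representative. Applying $T_{v}$ a second time (so that $\varpi_{v}^{2}$, which \emph{is} a local norm from $K_{v}^{*}$, returns us to the original Galois orbit) and invoking the assumption that $a_{v}$ is a $\lambda$-adic unit, together with Lemma~\ref{eis} applied to the principal primes in $S$ to transport congruences between neighbouring Galois orbits, I would propagate the modulo-$\lambda^{r}$ constancy property from the orbit of $z$ to the orbit of $c(y_{0}) \cdot \varpi_{v}$, and then by iteration to every fiber of $c$, obtaining Eisenstein-ness modulo $\lambda^{r}$ and the desired contradiction.

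The main obstacle is the local Hecke-CM analysis at the inert place $v$: making precise exactly how the Hecke double-coset representatives $\eta_{v,i}$ interact with the CM embedding $K_{v} \hookrightarrow B_{v}$, identifying which of the $[g_{0}\eta_{v,i}]$ lie in the same $\mathrm{Gal}_{K}^{\mathrm{ab}}$-orbit, and organizing the bookkeeping so that the resulting congruences can be bootstrapped, in combination with the principal-prime machinery of Lemma~\ref{eis}, into full Eisenstein-ness on every $\mathrm{Gal}_{K}^{\mathrm{ab}}$-orbit in $\mathrm{N}_{H}$.
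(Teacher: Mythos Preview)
Your approach misses the key structural fact that makes this lemma almost immediate: there are at most \emph{two} $\mathrm{Gal}_{K}^{\mathrm{ab}}$-orbits in $\mathrm{N}_{H}$ (coming from the fact that the image of $\hat{K}^{*}$ under the norm has index at most two in the relevant quotient of $\hat{F}^{*}$, since $[K:F]=2$). Given this, the hypothesis ``exceptional but non-Eisenstein modulo $\lambda^{r}$'' forces exactly two orbits, say $X$ (over which $\phi$ is fiber-constant modulo $\lambda^{r}$) and $Y$ (over which it is not). Pick $y\in Y$ and $x_{1},x_{2}\in \mathrm{c}^{-1}(y)$ with $\phi(x_{1})\not\equiv\phi(x_{2})\pmod{\lambda^{r}}$. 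For $v$ inert in $K$ and $v\nmid\mathcal{N}$, applying $T_{v}$ sends both $x_{1}$ and $x_{2}$ into the single fiber $\mathrm{c}^{-1}(\mathrm{Frob}_{v}\cdot y)$; since $v$ is inert, $\mathrm{Frob}_{v}\cdot y\in X$, so $\phi$ takes a common value $\phi(v,y)$ there modulo $\lambda^{r}$. Hence $a_{v}\phi(x_{1})\equiv (1+q_{v})\phi(v,y)\equiv a_{v}\phi(x_{2})\pmod{\lambda^{r}}$, and since $\phi(x_{1})\not\equiv\phi(x_{2})$, the scalar $a_{v}$ must be a $\lambda$-adic non-unit. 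No CM embedding, no second application of $T_{v}$, and no bootstrapping are needed.

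Your proposal therefore has a genuine gap. The local Hecke--CM orbit analysis you flag as the ``main obstacle'' is never carried out, and even if it were, your bootstrapping scheme would have two problems. First, it invokes Lemma~\ref{eis}, whose hypothesis that $\phi$ take a $\lambda$-adic unit value somewhere is not part of the present lemma's assumptions. Second, and more seriously, your iteration treats $\mathrm{N}_{H}$ as though it could have many $\mathrm{Gal}_{K}^{\mathrm{ab}}$-orbits to propagate through; without the two-orbit fact there is no reason your process reaches every fiber, and with it the entire propagation argument collapses to the one-step computation above.
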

\begin{proof}
The argument given here is drawn from \cite{cor-vat}. Recall that we have an action of the group $\mathrm{Gal}_{K}^{\mathrm{ab}}$ on $\mathrm{N}_{H}$, and one can show that there are at most two $\mathrm{Gal}_{K}^{\mathrm{ab}}$-orbit in $\mathrm{N}_{H}$. If there were only one $\mathrm{Gal}_{K}^{\mathrm{ab}}$-orbit in $\mathrm{N}_{H}$, then any exceptional function on $\mathrm{M}_{H}$ would also be Eisenstein. Since $\phi$ is exceptional and non-Eisenstein modulo $\lambda^{r}$, we know there must be exactly two $\mathrm{Gal}_{K}^{\mathrm{ab}}$-orbits in $\mathrm{N}_{H}$, which we denote by $X$ and $Y$ with $\phi$ being constant modulo $\lambda^{r}$ on $\mathrm{c}^{-1}(z)$ for all $z\in X$. Since $\phi$ is non-Eisenstein modulo $\lambda^{r}$, there exist $y\in Y$ and some $x_{1},x_{2}\in\mathrm{c}^{-1}(y)$ such that $\phi(x_{1})\not\equiv\phi(x_{2})\mod\lambda^{r}$.\\
Let $v$ be a finite place of $F$ that is inert in $K$ and does not divide $\mathcal{N}$. For any $x=[g]\in\mathrm{M}_{H}$, we know that $$T_{v}\phi(x)=a_{v}\phi(x)=\sum_{i\in I_{v}}\phi([g\eta_{v,i}]).$$ We also know that if $x\in\mathrm{c}^{-1}(y)$ then $[g\eta_{v,i}]\in\mathrm{c}^{-1}(\mathrm{Frob}_{v}.y)$. Since $v$ is inert in $K$, we get $\mathrm{Frob}_{v}.y\in X$, so that $\phi$ is constant modulo $\lambda^{r}$ on $\mathrm{c}^{-1}(\mathrm{Frob}_{v}.y)$ with $\phi(v,y)$ being the common value. Hence, $$a_{v}\phi(x_{1})\equiv(1+q_{v})\phi(v,y)\equiv a_{v}\phi(x_{2})\mod\lambda^{r}.$$ It follows that $a_{v}$ is a $\lambda$-adic non-unit, since otherwise $\phi(x_{1})$ and $\phi(x_{2})$ would be congruent modulo $\lambda^{r}$.
\end{proof}
 We shall assume henceforth that $g$ satisfies the condition: $a_{v}$ is a $\lambda$-adic unit for some $v$ inert in $K$, $v\nmid \mathcal{N}$. 
 \begin{definition}\label{cst}
Let $\mu$ be the smallest integer such that $a_{v}\not\equiv 1+q_{v}\mod\lambda^{\mu}$ for some $v\in S, v\nmid \mathcal{D}$. 
\end{definition}
It follows immediately from the definition of $\mu$ that the function $\theta$ as specified in Section 5 is non-exceptional modulo $\lambda^{\mu}$. 
 
If $\lambda$ lies above $p$ $(l=p)$, we let $s$ be the corresponding ramification index. In this case, we denote by $k$ the ring $E_{l}/\lambda^{sm+\mu}E_{l}$. If $\lambda$ does not lie above $p$ $(l\neq p)$, we denote by $k$ the ring $E_{l}/\lambda^{\mu}E_{l}$. We shall view $\theta$, $\theta_{m}$ and $\theta_{m,\mathcal{D}}$ as  $k$-valued functions. 
\begin{proposition}\label{nonzero}
The function $\theta_{m}:M_{H_{m}}\rightarrow k$ is a non-zero eigenfunction for all Hecke operators $T_{v}$ $(v\nmid \mathcal{P}\mathcal{N}'\mathcal{D}')$ with $T_{v}\theta_{m}=a_{v}\theta_{m}$. 
\end{proposition}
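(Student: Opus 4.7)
The proposition asserts two things about $\theta_m$: that it is a Hecke eigenform (at level $H_m$) away from $\mathcal{PN}'\mathcal{D}'$, and that it is not the zero function in $k$. The first claim is essentially formal, while the substantive content lies in the non-vanishing.

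For the eigenvalue property, the plan is to exploit the fact that $T_v$ commutes with right translations by elements supported away from $v$. For $v \nmid \mathcal{PN}'\mathcal{D}'$, the double coset defining $T_v$ decomposes as $H_v \mathrm{diag}(\varpi_v,1) H_v = \bigsqcup_i \eta_{v,i} H_v$ supported entirely in the $v$-th component of $G(\mathbb{A}_f)$, and acts by $T_v \phi(y) = \sum_i \phi(y\eta_{v,i})$. Since each $\alpha_{a,m}$ is trivial away from $\mathcal{P}$ and $v \neq \mathcal{P}$, the element $\alpha_{a,m}$ commutes with every $\eta_{v,i}$ in $G(\mathbb{A}_f)$. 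Therefore $T_v(\alpha_{a,m}.\theta) = \alpha_{a,m}.(T_v\theta) = a_v(\alpha_{a,m}.\theta)$, and taking the $\chi_1(\tau_a)$-weighted sum over $a$ yields $T_v\theta_m = a_v\theta_m$.

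For the non-vanishing, the plan is to exhibit a CM point $x_m$ at which $\psi_m(x_m)$ is a $\lambda$-adic unit. By the preceding proposition, for every $x \in \mathrm{CM}_H(\mathcal{P}^n)$ with $n \gg 0$ there is a corresponding $x_m \in \mathrm{CM}_{H_m}(\mathcal{P}^n)$ satisfying $\psi_m(x_m) = \sum_{\tau \in Z(n,m)} \chi_1(\tau)\psi(\tau.x)$, so it suffices to produce some $x$ for which this finite exponential sum is a unit in $k$. To do this I would apply the Cornut--Vatsal uniform distribution result (Theorem \ref{cv}) to the $Z(n,m)$-orbit: for $n$ sufficiently large, the configurations $(\mathrm{red}(\tau.x))_{\tau \in Z(n,m)}$ achievable as $x$ varies over $\mathrm{CM}_H(\mathcal{P}^n)$ exhaust the fibres of the $G(\infty)$-equivariant norm map $\mathrm{C} \colon M_H^{Z(n,m)} \to N_H^{Z(n,m)}$. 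Combined with the hypothesis that $\theta$ takes a $\lambda$-adic unit value at some $[g_0]\in \mathrm{M}_H$, I would arrange $\mathrm{red}(x) = [g_0]$ while placing the other $\mathrm{red}(\tau.x)$ in cosets where $\theta$ has large $\lambda$-adic valuation, so that the $\tau = 1$ contribution dominates.

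The principal difficulty will be in making this last step precise: Theorem \ref{cv} only controls reductions up to the norm constraint $\mathrm{C}$, and it is not a priori clear that the non-identity $Z(n,m)$-reductions can simultaneously be forced into cosets where $\theta$ has $\lambda$-adic valuation at least $smf + \mu$ (respectively $\mu$). A more robust alternative is to argue by contradiction: assuming $\theta_m \equiv 0$ in $k$, the preceding proposition furnishes a $\chi_1$-twisted vanishing identity among translates of $\theta$ on every CM orbit, and combining this with Lemma \ref{eis} and the exceptional-function criterion should force $a_v \equiv 1 + q_v \pmod{\lambda^\mu}$ for all $v \in S$ with $v \nmid \mathcal{D}$, contradicting the minimality in Definition \ref{cst}.
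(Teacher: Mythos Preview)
Your treatment of the Hecke eigenvalue claim is fine and matches the paper's.

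For the non-vanishing, however, both of your proposed routes have genuine gaps, and neither captures the key idea the paper uses. Your approach (a) tries to apply Theorem~\ref{cv} to the $Z(n,m)$-orbit, but the theorem as stated concerns the map $\mathrm{RED}$ indexed by representatives $\mathcal{R}$ of $G_0/G_1$; the elements of $Z(n,m)$ lie in the free $\mathbb{Z}_p$-part of $G(\infty)$, not in $G_0$, so the uniform distribution result does not directly give you independent control over the reductions $\mathrm{red}(\tau.x)$ for $\tau\in Z(n,m)$. You correctly flag a difficulty, but it is more fundamental than you indicate. Your approach (b) is too vague: the vanishing of $\theta_m$ gives a single $\chi_1$-twisted linear relation $\sum_a \chi_1(\tau_a)\theta(g\alpha_{a,m})=0$ for each $g$, and there is no evident mechanism by which Lemma~\ref{eis} or the exceptional-function criterion extracts an Eisenstein congruence for $\theta$ from such a relation.

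The paper's argument supplies exactly the missing mechanism: assuming $\theta_m=0$ in $k$, one averages the translates $\beta_u.\theta_m$ over all units $u\in (O_{F,\mathcal{P}}/\mathcal{P}^m)^*$, where $\beta_u$ is the diagonal element $h_{\mathcal{P}}\,\mathrm{diag}(u,1)\,h_{\mathcal{P}}^{-1}$ at $\mathcal{P}$. This converts the inner character sum into $\sum_{u}\chi_1(\tau_{ua})$, which is evaluated by an elementary Gauss-sum computation (Lemma~\ref{char}); the result collapses to $q^{m-1}\theta^+$, where $\theta^+ = q\theta - \sum_{a\in O_{F,\mathcal{P}}/\mathcal{P}}\alpha_{a,1}.\theta$. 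One then invokes Lemma~4.12 of \cite{cor-vat}, which shows $q^{m-1}\theta^+\neq 0$ in $k$ precisely because $\theta$ is non-Eisenstein modulo $\lambda^\mu$. The unit-averaging step is the substantive idea you are missing: it eliminates the dependence on $\chi_1$ entirely and reduces the question to a known non-vanishing statement about $\theta$ itself.
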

\begin{proof}
It is clear that $\theta_{m}$ is an eigenfunction for all Hecke operators $T_{v}$ $(v\nmid \mathcal{P}\mathcal{N}'\mathcal{D}')$ with $T_{v}\theta_{m}=a_{v}\theta_{m}$.

For every $u\in(O_{F_{\mathcal{P}}}/\mathcal{P}^{m})^{*}$, we put
\begin{equation*}
\beta_{u}=(1,1,...,\underbrace{h_{\mathcal{P}}\left( \begin{array}{cc} u &0 \\ 0& 1 \end{array} \right)h_{\mathcal{P}}^{-1}}_{\mathcal{P}^{\mathrm{th}}\mathrm{place}},...,1,1),
\end{equation*}
where the element $h_{\mathcal{P}}$ is as specified in (\ref{order}).
If $\theta_{m}=0$ as a $k$-valued function, then  
\begin{align*}
0&=\sum_{u\in(O_{F_{\mathcal{P}}}/\mathcal{P}^{m})^{*}}\beta_{u}.\theta_{m}\\&=\sum_{u\in(O_{F_{\mathcal{P}}}/\mathcal{P}^{m})^{*}}\sum_{a\in O_{F_{\mathcal{P}}}/\mathcal{P}^{m}}\chi_{1}(\tau_{ua})\alpha_{a,m}.\theta \\&=\sum_{a\in O_{F_{\mathcal{P}}}/\mathcal{P}^{m}}\alpha_{a,m}.\theta\sum_{u\in(O_{F_{\mathcal{P}}}/\mathcal{P}^{m})^{*}}\chi_{1}(\tau_{ua})
\end{align*} 
 By means of Lemma \ref{char} below, we get  \begin{align*}0&=p^{m-1}(p-1)\theta-p^{m-1}\sum_{a\in{(O_{F_{\mathcal{P}}}/\mathcal{P}})^{*}}\alpha_{a,1}.\theta\\&=p^{m}\theta-p^{m-1}\sum_{a\in O_{F_{\mathcal{P}}}/\mathcal{P}}\alpha_{a,1}.\theta\\&=p^{m-1}\left(p\theta-\sum_{a\in O_{F_{\mathcal{P}}}/\mathcal{P}}\alpha_{a,1}.\theta\right)\\&=p^{m-1}\theta^{+}.\end{align*}This yields a contradiction since $p^{m-1}\theta^{+}$ is non-zero by Lemma 4.12 in \cite{cor-vat}; the proof of this lemma uses the fact that $\theta$ is non-eisenstein modulo $\lambda^{\mu}$. The reader is referred to \cite{cor-vat} for a description of the function $\theta^{+}$ and its properties $($see, for example, Section 1.6, Theorem 5.10 and the Appendix$)$.
\end{proof}
\begin{corollary}
$\theta_{m}$ is non-exceptional as a $k$-valued function.
\end{corollary}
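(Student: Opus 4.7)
The plan is to argue by contradiction using the two dichotomy lemmas of this section. Suppose $\theta_{m}$ is exceptional as a $k$-valued function. By Proposition \ref{nonzero}, $\theta_{m}$ is a non-zero Hecke eigenfunction on $\mathrm{M}_{H_{m}}$ with $T_{v}\theta_{m}=a_{v}\theta_{m}$ for every $v\nmid\mathcal{P}\mathcal{N}'\mathcal{D}'$; after rescaling by a suitable power of $\lambda$, we may assume $\theta_{m}$ takes a $\lambda$-adic unit value at some point of $\mathrm{M}_{H_{m}}$. The next step is to split into two sub-cases according to whether $\theta_{m}$ is Eisenstein modulo $\lambda^{\mu}$ or not.

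In the Eisenstein sub-case, apply Lemma \ref{eis} at the level $H_{m}$ to conclude that $a_{v}\equiv q_{v}+1 \pmod{\lambda^{\mu}}$ for every $v$ in the corresponding set $S_{m}$. Because $H_{m}$ coincides with $H$ at every place $v\neq\mathcal{P}$, any $v\in S$ with $v\nmid\mathcal{P}$ also lies in $S_{m}$. A witness $v_{0}\in S$ with $v_{0}\nmid\mathcal{P}\mathcal{D}$ for the minimality of $\mu$ in Definition \ref{cst} therefore lies in $S_{m}$ and yields the desired contradiction $a_{v_{0}}\not\equiv q_{v_{0}}+1 \pmod{\lambda^{\mu}}$.

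In the non-Eisenstein sub-case, $\theta_{m}$ is exceptional but not Eisenstein modulo $\lambda^{\mu}$. Applying the second lemma of this section at the level $H_{m}$ then forces $a_{v}$ to be a $\lambda$-adic non-unit for every finite place $v$ inert in $K$ that is coprime to the level of $H_{m}$. Since $H_{m}$ and $H$ agree outside $\mathcal{P}$, every such $v$ away from $\mathcal{P}$ is coprime to $\mathcal{N}$, contradicting the standing assumption stated just before Definition \ref{cst} that some such $a_{v}$ is a $\lambda$-adic unit. Both sub-cases yield a contradiction, so $\theta_{m}$ must be non-exceptional.

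The main obstacle is bookkeeping rather than conceptual: one has to verify that Lemma \ref{eis} and its non-Eisenstein companion transfer cleanly from the level $H$ to the enlarged level $H_{m}$, and that the auxiliary set $S_{m}$ and the level ideal of $H_{m}$ still admit the witnesses provided by Definition \ref{cst} and by the standing assumption. Since $H_{m}$ coincides with $H$ at all places $v\neq\mathcal{P}$, both requirements reduce to arranging that the witnessing prime $v_{0}$ can be taken away from $\mathcal{P}$, which is built into the choice already made in formulating $\mu$.
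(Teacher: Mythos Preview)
Your approach via the two dichotomy lemmas is precisely what the paper has in mind (the corollary is stated without proof immediately after Proposition~\ref{nonzero}, and one is meant to rerun the argument that showed $\theta$ is non-exceptional modulo $\lambda^{\mu}$). The non-Eisenstein sub-case is fine as written: the exceptional-but-non-Eisenstein lemma carries no unit-value hypothesis, its proof transfers verbatim to level $H_{m}$, and the standing assumption supplies an inert $v$ with $a_{v}$ a unit and $v\nmid\mathcal{N}$, hence $v$ coprime to the level of $H_{m}$.

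The Eisenstein sub-case, however, has a real gap coming from your rescaling step. Let $j\ge 0$ be the minimal $\lambda$-adic valuation attained by $\theta_{m}$, and set $\tilde\theta_{m}=\lambda^{-j}\theta_{m}$; Proposition~\ref{nonzero} only gives $j<\mu$ (respectively $j<smf+\mu$), not $j=0$. The hypothesis ``$\theta_{m}$ is Eisenstein in $k$'' translates, after dividing the congruences $\theta_{m}(p_{1})\equiv\theta_{m}(p_{2})$ by $\lambda^{j}$, into ``$\tilde\theta_{m}$ is Eisenstein modulo $\lambda^{\mu-j}$'' (respectively $\lambda^{smf+\mu-j}$), not modulo $\lambda^{\mu}$. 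Lemma~\ref{eis} applied to $\tilde\theta_{m}$ with $r=\mu-j$ then yields only $a_{v}\equiv 1+q_{v}\pmod{\lambda^{\mu-j}}$, and for $j\ge 1$ this is \emph{consistent} with Definition~\ref{cst}: by the minimality of $\mu$, such a congruence holds for every $v$ when the exponent is below $\mu$. So no contradiction is reached. Conversely, showing $\tilde\theta_{m}$ non-exceptional in $k$ does not imply $\theta_{m}$ non-exceptional in $k$ either, since multiplying back by $\lambda^{j}$ can push a nonzero difference into $\lambda^{\mu}$.

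To close this gap you must either argue directly that $\theta_{m}$ attains a $\lambda$-adic unit value (so that $j=0$ and no rescaling is needed), or handle the Eisenstein case by another route---for instance, by observing that if $\theta_{m}$ were Eisenstein in $k$ then so would each $\beta_{u}.\theta_{m}$, hence so would $\sum_{u}\beta_{u}.\theta_{m}=q^{m-1}\theta^{+}$, and then invoking the finer properties of $\theta^{+}$ established in \cite{cor-vat}.
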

\begin{lemma}\label{char}
For $a\in O_{F_{\mathcal{P}}}/\mathcal{P}^{m}$, we have
 \begin{align*}\sum_{u\in(O_{F_{\mathcal{P}}}/\mathcal{P}^{m})^{*}}\chi_{1}(\tau_{ua})&=\left\{
        \begin{array}{ll}
            p^{m-1}(p-1) & \quad a\in \mathcal{P}^{m} \\
            -p^{m-1} & \quad a\in \mathcal{P}^{m-1}/\mathcal{P}^{m} \text{ and } a\notin \mathcal{P}^{m}\\
            0 & \quad \text{otherwise} 
        \end{array}
    \right.\end{align*}
\end{lemma}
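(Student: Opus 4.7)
The plan is to recognize the inner expression as a classical character sum on an additive group. From the proof of Lemma \ref{Z(n,m)}, for $n\geq 2m$ the assignment $a\mapsto\tau_a$ is a group isomorphism from $(O_{F,\mathcal{P}}/\mathcal{P}^m,+)$ to $Z(n,m)$: expanding $(1+a\alpha_{\mathcal{P}}\varpi_{\mathcal{P}}^{n-m})(1+b\alpha_{\mathcal{P}}\varpi_{\mathcal{P}}^{n-m})$ shows that the deviation from $1+(a+b)\alpha_{\mathcal{P}}\varpi_{\mathcal{P}}^{n-m}$ lies in $1+\mathcal{P}^n O_{K,\mathcal{P}}\subset O^*_{\mathcal{P}^n,\mathcal{P}}$, so $\tau_a\tau_b=\tau_{a+b}$. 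Thus $\phi(a):=\chi_1(\tau_a)$ defines an additive character of $O_{F,\mathcal{P}}/\mathcal{P}^m$ with values in $E_\lambda^*$.

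Next, one checks that under $a\mapsto\tau_a$ the subgroup $\mathcal{P}^{m-1}/\mathcal{P}^m$ corresponds precisely to $Z(n,1)=\ker(H(n)\twoheadrightarrow H(n-1))$: indeed $1+a\alpha_\mathcal{P}\varpi_\mathcal{P}^{n-m}\in O^*_{\mathcal{P}^{n-1},\mathcal{P}}$ iff $a\in\mathcal{P}^{m-1}$. Since $\chi_1$ is primitive of conductor $\mathcal{P}^n$ it is nontrivial on $Z(n,1)$, so $\phi$ is nontrivial on $\mathcal{P}^{m-1}/\mathcal{P}^m$, and \emph{a fortiori} on every $\mathcal{P}^j/\mathcal{P}^m$ with $0\leq j\leq m-1$. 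By orthogonality, $\sum_{b\in\mathcal{P}^j/\mathcal{P}^m}\phi(b)=0$ for $0\leq j\leq m-1$, while the sum over $\mathcal{P}^m/\mathcal{P}^m=\{0\}$ equals $\phi(0)=1$.

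To finish, let $k$ denote the $\mathcal{P}$-adic valuation of $a$ (with the convention $k\geq m$ when $a\in\mathcal{P}^m$); when $k<m$ write $a=c\varpi_\mathcal{P}^k$ with $c$ a unit. The substitution $w=uc$ is a bijection of $(O_F/\mathcal{P}^m)^*$, and $\phi(w\varpi_\mathcal{P}^k)$ depends only on $w$ modulo $\mathcal{P}^{m-k}$; moreover multiplication by $\varpi_\mathcal{P}^k$ identifies $O_F/\mathcal{P}^{m-k}$ with $\mathcal{P}^k/\mathcal{P}^m$ and $\mathcal{P}/\mathcal{P}^{m-k}$ with $\mathcal{P}^{k+1}/\mathcal{P}^m$. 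Hence, for $k<m$,
$$\sum_{u\in(O_F/\mathcal{P}^m)^*}\chi_1(\tau_{ua}) = q^k\!\!\!\sum_{w\in(O_F/\mathcal{P}^{m-k})^*}\!\!\!\phi(w\varpi_\mathcal{P}^k) = q^k\Bigl(\sum_{b\in\mathcal{P}^k/\mathcal{P}^m}\phi(b)-\sum_{b\in\mathcal{P}^{k+1}/\mathcal{P}^m}\phi(b)\Bigr).$$
The three cases of the lemma now drop out: $a\in\mathcal{P}^m$ makes every summand equal $\phi(0)=1$, yielding $|(O_F/\mathcal{P}^m)^*|=q^{m-1}(q-1)$; $k=m-1$ gives $q^{m-1}(0-1)=-q^{m-1}$ by primitivity; and $k\leq m-2$ gives $q^k(0-0)=0$. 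The only genuine obstacle is the verification that $\tau_a\tau_b=\tau_{a+b}$, which is why we need $n$ large enough relative to $m=er$; this is absorbed into the standing assumption $n\gg 0$.
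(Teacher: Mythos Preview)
Your proof is correct and follows essentially the same approach as the paper: both arguments split the sum over units as a full sum over $O_F/\mathcal{P}^m$ minus the sum over $\mathcal{P}/\mathcal{P}^m$ and evaluate each piece by orthogonality of the additive character $a\mapsto\chi_1(\tau_a)$. The paper's version is terse and leaves the additivity of $a\mapsto\tau_a$ and the primitivity input implicit, whereas you spell these out and organize the computation via the valuation $k$ of $a$; this is a cosmetic reorganization rather than a different method.
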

\begin{proof}
The statement of the lemma follows trivially for $a\equiv0\mod\mathcal{P}^{m}$. For the remaining cases, we write $$\sum_{u\in(O_{F_{\mathcal{P}}}/\mathcal{P}^{m})^{*}}\chi_{1}(\tau_{ua})=\sum_{u\in O_{F_{\mathcal{P}}}/\mathcal{P}^{m}}\chi_{1}(\tau_{ua})-\sum_{u\in\mathcal{P}/\mathcal{P}^{m}}\chi_{1}(\tau_{ua})$$ Notice that if $a\in \mathcal{P}^{m-1}/\mathcal{P}^{m}$, we have $$\sum_{u\in O_{F_{\mathcal{P}}}/\mathcal{P}^{m}}\chi_{1}(\tau_{ua})=0 \text{ and }\sum_{u\in\mathcal{P}/\mathcal{P}^{m}}\chi_{1}(\tau_{ua})=p^{m-1}.$$Otherwise, we get $$\sum_{u\in O_{F_{\mathcal{P}}}/\mathcal{P}^{m}}\chi_{1}(\tau_{ua})=\sum_{u\in\mathcal{P}/\mathcal{P}^{m}}\chi_{1}(\tau_{ua})=0$$. \end{proof}
\begin{proposition}\label{propD}
The function $\theta_{m,\mathcal{D}}:M_{H_{m,\mathcal{D}}}\rightarrow k$ is a non-zero eigenfunction for all Hecke operators $T_{v}$ away from $\mathcal{P}\mathcal{N}'\mathcal{D}'$ with $T_{v}\theta_{m,\mathcal{D}}=a_{v}\theta_{m,\mathcal{D}}$.
\end{proposition}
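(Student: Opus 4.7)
The plan is to closely mirror the strategy of Proposition~\ref{nonzero}, now applied at the second level-raising step that involves the ramified primes $\mathcal{Q}\mid\mathcal{D}'$. First I would establish the Hecke eigenfunction property: by construction (following Lemma~5.9 of \cite{cor-vat}), the form $\theta_{m,\mathcal{D}}$ is obtained from $\theta_m$ via a sum of translates indexed by $G_1/G_2$, whose local components sit only at the primes dividing $\mathcal{D}'$. Any Hecke operator $T_v$ with $v\nmid\mathcal{P}\mathcal{N}'\mathcal{D}'$ is supported at the place $v$ and hence commutes with this local averaging. Combined with Proposition~\ref{nonzero}, this immediately yields $T_v\theta_{m,\mathcal{D}}=a_v\theta_{m,\mathcal{D}}$.

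For the non-vanishing of $\theta_{m,\mathcal{D}}$ in $k$, I would argue by contradiction, paralleling the averaging trick used in Proposition~\ref{nonzero}. Assume $\theta_{m,\mathcal{D}}\equiv 0$ as a $k$-valued function. I would then apply, one prime at a time, an averaging operator at each $\mathcal{Q}\mid\mathcal{D}'$: a sum of $\beta_u$-type translates over the local units $(O_{F,\mathcal{Q}}/\mathcal{Q})^*$ twisted by the appropriate component of $\chi_0$. Using a character-sum identity analogous to Lemma~\ref{char} (now for the quadratic character attached to $K/F$ at $\mathcal{Q}$), these successive averagings should collapse the level-raising data at each such $\mathcal{Q}$, ultimately producing a non-trivial $k$-multiple of $\theta_m^+$ (or the analogous object of \cite{cor-vat}, Section~1.6). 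Since $\theta_m$ is non-exceptional modulo $\lambda^\mu$ by Proposition~\ref{nonzero} and its corollary, the function $\theta_m^+$ is non-zero in $k$ by Lemma~4.12 and Theorem~5.10 of \cite{cor-vat}, giving the desired contradiction.

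The main obstacle I anticipate lies in the $\lambda$-adic bookkeeping in this averaging step, particularly when $l=p$. The non-vanishing of the final scalar multiplying $\theta_m^+$ in $k=E_l/\lambda^{smf+\mu}E_l$ requires careful tracking of the powers of $p$ (and hence of $\lambda$) introduced by averaging over groups of $p$-power order attached to the Frobenius classes in $G_1/G_2$, as well as those already absorbed by the first level-raising at $\mathcal{P}$. This is precisely the reason $\mu$ is inflated by $smf$ in the $l=p$ case in Definition~\ref{cst}; checking that the combined scalar remains a unit in $k$ will require combining the appendix estimates of \cite{cor-vat} with the definition of $\mu$, and this is the delicate point of the argument.
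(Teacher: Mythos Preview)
Your treatment of the Hecke eigenfunction property is fine and matches the paper: since the translates $\alpha_d$ defining $\theta_{m,\mathcal{D}}$ are supported at primes dividing $\mathcal{D}'$, any $T_v$ with $v\nmid\mathcal{P}\mathcal{N}'\mathcal{D}'$ commutes with them, and Proposition~\ref{nonzero} gives the eigenvalue $a_v$.

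The non-vanishing argument, however, has a genuine gap. The averaging trick of Proposition~\ref{nonzero} works because the first level-raising at $\mathcal{P}$ is a sum of \emph{unipotent} translates $\alpha_{a,m}$ indexed by the additive group $O_{F,\mathcal{P}}/\mathcal{P}^m$ and twisted by a \emph{primitive} character $\chi_1$; summing over $(O_{F,\mathcal{P}}/\mathcal{P}^m)^*$ then collapses via the orthogonality relation of Lemma~\ref{char}. The second level-raising is structurally different: by the construction in \cite{cor-vat} one has
\[
\theta_{m,\mathcal{D}}=\sum_{d\mid\mathcal{D}'}\chi_0(\sigma_d)\,(\alpha_d.\theta_m),
\]
where each $\alpha_{\mathcal{Q}}$ is an element of $R_{\mathcal{Q}}$ of reduced norm a uniformizer (an Atkin--Lehner type translate), and the twist $\chi_0(\sigma_{\mathcal{Q}})=\pm 1$ is a sign. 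There is no additive parameter running over $O_{F,\mathcal{Q}}/\mathcal{Q}$ and no primitive character to orthogonalize against, so your proposed ``sum of $\beta_u$-type translates over $(O_{F,\mathcal{Q}}/\mathcal{Q})^*$'' has nothing to collapse; at best it multiplies by $|(O_{F,\mathcal{Q}}/\mathcal{Q})^*|$. In particular there is no route from $\theta_{m,\mathcal{D}}\equiv 0$ to a nontrivial multiple of $\theta_m^{+}$ along these lines. (Your $l=p$ worry is also misplaced here: $G_1/G_2$ is an $\mathbb{F}_2$-vector space and the primes $\mathcal{Q}\mid\mathcal{D}'$ are prime to $\mathcal{P}$, so no extra powers of $p$ enter at this step.)

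The paper's argument is different and avoids any averaging. One fixes a prime $\mathcal{Q}\mid\mathcal{D}'$ and writes $\theta_{m,\mathcal{D}}=\vartheta_1+\chi_0(\sigma_{\mathcal{Q}})\,\alpha_{\mathcal{Q}}.\vartheta_2$, where $\vartheta_1,\vartheta_2$ are the partial sums over $d\mid\mathcal{D}'/\mathcal{Q}$. If a nontrivial combination $a\vartheta_1+b\,\alpha_{\mathcal{Q}}.\vartheta_2$ vanished in $k$, then $a\vartheta_1=-b\,\alpha_{\mathcal{Q}}.\vartheta_2$ would be invariant under both $R_{\mathcal{Q}}^*$ and $\alpha_{\mathcal{Q}}R_{\mathcal{Q}}^*\alpha_{\mathcal{Q}}^{-1}$; these generate a group containing the norm-one elements of $B_{\mathcal{Q}}^*$, so by strong approximation $\vartheta_1$ would factor through the reduced norm, i.e.\ be Eisenstein in $k$, contradicting the choice of $\mu$ via Lemma~\ref{eis}. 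Inducting on the number of prime divisors of $\mathcal{D}'$ reduces to $\theta_m$, which is handled by Proposition~\ref{nonzero}. The key input you are missing is this strong-approximation/non-Eisenstein step, not a character-sum identity.
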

\begin{proof}
By definition $($see \cite{cor-vat} p. $57$$)$, $$\theta_{m,\mathcal{D}}=\sum_{d\mid \mathcal{D}'}\chi_{0}(\sigma_{d})(\alpha_{d}.\theta_{m}),$$ where $\displaystyle{\alpha_{d}=\prod_{Q\mid d}\alpha_{Q}}$, and $\alpha_{Q}$ is an element in $R_{Q}\sim M_{2}(O_{F_{Q}})$ whose reduced norm is a uniformizer in $O_{F_{Q}}$. Notice that $\theta_{m,\mathcal{D}}$ is left-invariant under $H_{m,\mathcal{D}}=\hat{R}_{m,\mathcal{D}}$ where $R_{m,\mathcal{D}}$ is the unique $O_{F}$-order which agrees with $R_{m}$ outside $\mathcal{D}'$ and equals $R_{Q}\cap\alpha_{Q} R_{Q}\alpha_{Q}^{-1}$ at $Q\mid \mathcal{D}'$. 

If we fix  a prime divisor $Q$ of $\mathcal{D}'$, it is easy to see that $\theta_{m,\mathcal{D}}$ can be rewritten as $$\theta_{m,\mathcal{D}}=\sum_{d\mid\frac{\mathcal{D}'}{Q}}\chi_{0}(\sigma_{d})(\alpha_{d}.\theta_{m})+\chi_{0}(\sigma_{Q})\alpha_{Q}.\sum_{d\mid\frac{\mathcal{D}'}{Q}}\chi_{0}(\sigma_{d})(\alpha_{d}.\theta_{m}).$$ Let $\vartheta_{1}$ and $\vartheta_{2}$ be $k$-valued functions on $M_{H_{m}}$ satisfying $T_{v}\vartheta_{i}=a_{v}\vartheta_{i}$ for all $v\nmid \mathcal{P}\mathcal{N}'\mathcal{D}'$. We claim that any nontrivial linear combination $a\vartheta_{1}+b\alpha_{Q}.\vartheta_{2}$ is non-zero in $k$. If $a\vartheta_{1}+b\alpha_{Q}.\vartheta_{2}=0$ for some scalars $a$ and $b$, then $a\vartheta_{1}=-b\alpha_{q}.\vartheta_{2}$ is fixed under the group spanned by $R_{Q}^{*}$ and $\alpha_{Q}R_{Q}^{*}\alpha_{Q}^{-1}$ which contains the kernel of the reduced norm map $B_{\mathcal{P}}\rightarrow F_{\mathcal{P}}$. It follows from the strong approximation theorem $($\cite{Vig} p. $81$$)$ that $\vartheta_{1}$ factors through the norm map as a $k$-valued function, which is a contradiction to the fact that $a_{v}\not\equiv q_{v}+1\mod \lambda^{\mu}$ for some $v\in S$ $($Lemma \ref{eis}$)$. Hence, $a\vartheta_{1}+b\alpha_{Q}.\vartheta_{2}$ is non-zero. Not only this, but  $a\vartheta_{1}+b\alpha_{Q}.\vartheta_{2}$ is also an eigenfunction for all $T_{v}$ $(v\nmid \mathcal{P}\mathcal{N}'\mathcal{D}')$ with the same eigenvalues as $\vartheta_{1}$ and $\vartheta_{2}$.

In light of the above observation, we proceed by induction on the number of prime ideal divisors of $\mathcal{D}'$ to prove that $\theta_{m,\mathcal{D}}$ is non-zero and satisfies $T_{v}\theta_{m,\mathcal{D}}=a_{v}\theta_{m,\mathcal{D}}$ for all $v\nmid \mathcal{P}\mathcal{N}'\mathcal{D}'$. This reduces the problem to the case of $\theta_{m}$ which satisfies the required hypothesis by Proposition \ref{nonzero}.
\end{proof}
\begin{corollary}
$\theta_{m,\mathcal{D}}$ is non-exceptional as a $k$-valued function.
\end{corollary}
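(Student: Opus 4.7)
The plan is to mimic the argument that justifies the preceding corollary (for $\theta_m$), deducing non-exceptionality of $\theta_{m,\mathcal{D}}$ by combining Proposition \ref{propD} with the two structural lemmas (Lemma \ref{eis} and the subsequent lemma on exceptional non-Eisenstein functions), together with Definition \ref{cst} and the standing assumption on $g$.

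First, I would argue by contradiction: suppose $\theta_{m,\mathcal{D}}$ is exceptional as a $k$-valued function. Because $\theta_{m,\mathcal{D}}$ is non-zero in $k$ (Proposition \ref{propD}) and the set $M_{H_{m,\mathcal{D}}}$ is finite, there is a value of minimal $\lambda$-adic valuation; rescaling by a power of a uniformizer of $\lambda$, I may assume without loss of generality that $\theta_{m,\mathcal{D}}$ takes a $\lambda$-adic unit value at some point. Rescaling by a unit preserves both the Hecke eigenvalue equation $T_v\theta_{m,\mathcal{D}}=a_v\theta_{m,\mathcal{D}}$ (for $v\nmid\mathcal{P}\mathcal{N}'\mathcal{D}'$) established in Proposition \ref{propD}, and the property of being exceptional or Eisenstein modulo $\lambda^\mu$.

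Second, I would split on whether $\theta_{m,\mathcal{D}}$ is Eisenstein modulo $\lambda^{\mu}$. If it is, then Lemma \ref{eis} (whose proof uses only the Hecke action and is insensitive to the particular level structure) applied to $\phi=\theta_{m,\mathcal{D}}$ with $r=\mu$ gives $a_v\equiv q_v+1\pmod{\lambda^{\mu}}$ for every $v\in S$. In particular, this holds for any $v\in S$ with $v\nmid\mathcal{D}$ (so that $v\nmid\mathcal{P}\mathcal{N}'\mathcal{D}'$, which is needed to invoke the eigenvalue equation from Proposition \ref{propD}), contradicting the very definition of $\mu$ in Definition \ref{cst}. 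If instead $\theta_{m,\mathcal{D}}$ is exceptional but non-Eisenstein modulo $\lambda^{\mu}$, then the second lemma forces $a_v$ to be a $\lambda$-adic non-unit for every finite place $v$ of $F$ that is inert in $K$ and does not divide $\mathcal{N}$. This contradicts the standing assumption made just before Definition \ref{cst}, namely that some such $v$ yields a unit $a_v$.

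Either horn of the dichotomy produces a contradiction, so $\theta_{m,\mathcal{D}}$ cannot be exceptional. The main (minor) obstacle is verifying that Lemma \ref{eis} and its companion, though formally stated for $\phi$ on $\mathrm{M}_H$, apply verbatim to $\theta_{m,\mathcal{D}}$ on $\mathrm{M}_{H_{m,\mathcal{D}}}$: their proofs use only the Hecke correspondence at places away from the level and the existence of a $\lambda$-adic unit value, both of which are guaranteed here by Proposition \ref{propD} and the rescaling in the opening step. No new computation is required beyond this bookkeeping.
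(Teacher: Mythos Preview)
Your proposal is correct and is precisely the implicit argument the paper intends: the corollary carries no proof in the text, and the reasoning is the same Eisenstein/exceptional-non-Eisenstein dichotomy already invoked just after Definition \ref{cst} for $\theta$ itself, now transported to level $H_{m,\mathcal{D}}$ using Proposition \ref{propD}. The one loose point is your rescaling step (multiplying by a power of a uniformizer only yields the Eisenstein property modulo $\lambda^{\mu-j}$, not modulo $\lambda^{\mu}$), but the paper treats the unit-value hypothesis of Lemma \ref{eis} at the same informal level---witness its invocation inside the proof of Proposition \ref{propD}---so your write-up matches the paper's standard of rigor.
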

Now we state and prove the main result in this paper. This result gives an upper bound for the $l$-adic valuation of  the sum $$\sum_{\tau\in \mathcal{R}}\chi_{0}(\tau)\psi_{m,\mathcal{D}}(\tau.x_{m,\mathcal{D}}),$$ which we recall is related to the Gross-Zagier sum $\mathrm{a}(x,\chi)$ by the formula $$\mathrm{Tr}(\mathrm{a}(x,\chi))=[E_{\lambda}(\chi_{1}):E_{\lambda}]\sum_{\tau\in \mathcal{R}}\chi_{0}(\tau)\psi_{m,\mathcal{D}}(\tau.x_{m,\mathcal{D}}).$$
\begin{theorem}\label{mainthm}
Let $\chi_{0}$ be any character of $G_{0}$. For any $x\in CM_{H_{m,\mathcal{D}}}(\mathcal{P}^{n})$ with $n\gg0$, there exists some $y\in G(\infty).x$ such that 
$$\sum_{\tau\in \mathcal{R}}\chi_{0}(\tau)\psi_{m,\mathcal{D}}(\tau.y)\not\equiv 0\quad(\text{in } k).$$
\end{theorem}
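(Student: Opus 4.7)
The strategy is a contradiction argument combining the Cornut--Vatsal uniform distribution theorem (Theorem \ref{cv}) with the non-exceptionality of $\theta_{m,\mathcal{D}}$ modulo $\lambda^{\mu}$ established in the corollary following Proposition \ref{propD}. First, I restrict attention to those $x$ for which the conclusion of Theorem \ref{cv} holds; this excludes only finitely many CM points, so for $n \gg 0$ every $x \in \mathrm{CM}_{H_{m,\mathcal{D}}}(\mathcal{P}^{n})$ qualifies. Suppose for contradiction that for every $y \in G(\infty).x$,
$$S(y) \;:=\; \sum_{\tau \in \mathcal{R}} \chi_{0}(\tau)\,\theta_{m,\mathcal{D}}\!\bigl(\mathrm{red}(\tau.y)\bigr) \;\equiv\; 0 \quad (\text{in } k).$$

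Fix a coordinate $\tau_{0} \in \mathcal{R}$. By Theorem \ref{cv}, any tuple in $\mathrm{M}_{H_{m,\mathcal{D}}}^{\mathcal{R}}$ whose $\mathrm{C}$-image lies in $G(\infty).\mathrm{C}\circ \mathrm{RED}(x)$ is realized as $\mathrm{RED}(y')$ for some $y' \in G(\infty).x$. Given $y \in G(\infty).x$ and any $p \in \mathrm{M}_{H_{m,\mathcal{D}}}$ with $\mathrm{c}(p) = \mathrm{c}(\mathrm{red}(\tau_{0}.y))$, the tuple obtained from $\mathrm{RED}(y)$ by replacing only the $\tau_{0}$-entry by $p$ has the same $\mathrm{C}$-image, and therefore arises as $\mathrm{RED}(y')$ for some $y' \in G(\infty).x$ agreeing with $y$ outside $\tau_{0}$. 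Subtracting $S(y') \equiv 0$ from $S(y) \equiv 0$ yields
$$\chi_{0}(\tau_{0})\bigl(\theta_{m,\mathcal{D}}(\mathrm{red}(\tau_{0}.y)) - \theta_{m,\mathcal{D}}(p)\bigr) \;\equiv\; 0 \pmod{\lambda^{\mu}}.$$
Since $\chi_{0}(\tau_{0})$ is a root of unity and hence a unit in $E_{l}$, this forces $\theta_{m,\mathcal{D}}$ to be constant modulo $\lambda^{\mu}$ on the fiber $\mathrm{c}^{-1}(\mathrm{c}(\mathrm{red}(\tau_{0}.y)))$.

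Letting $y$ vary over $G(\infty).x$, the point $\mathrm{c}(\mathrm{red}(\tau_{0}.y))$ sweeps out the $G(\infty)$-orbit of $z := \mathrm{c}(\mathrm{red}(\tau_{0}.x))$ in $\mathrm{N}_{H_{m,\mathcal{D}}}$. Because the $\mathrm{Gal}_{K}^{\mathrm{ab}}$-action on $\mathrm{N}_{H_{m,\mathcal{D}}}$ factors through the surjection $\mathrm{Gal}_{K}^{\mathrm{ab}} \twoheadrightarrow G(\infty)$, this $G(\infty)$-orbit coincides with the $\mathrm{Gal}_{K}^{\mathrm{ab}}$-orbit of $z$. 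Hence $\theta_{m,\mathcal{D}}$ is constant modulo $\lambda^{\mu}$ on $\mathrm{c}^{-1}(\sigma.z)$ for every $\sigma \in \mathrm{Gal}_{K}^{\mathrm{ab}}$, so $\theta_{m,\mathcal{D}}$ is exceptional modulo $\lambda^{\mu}$, contradicting the corollary to Proposition \ref{propD}.

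The main obstacle I anticipate is verifying that Theorem \ref{cv} supplies the pointwise realization used in the second paragraph: one needs that a single-coordinate modification of $\mathrm{RED}(y)$ within a $\mathrm{c}$-fiber is actually achieved by some $y' \in G(\infty).x$, not merely that the two sides of the displayed equality match as sets in the abstract. This is built into the statement of Theorem \ref{cv} as a description of $\mathrm{RED}(G(\infty).x)$ as a full $\mathrm{C}$-preimage, but invoking it requires absorbing the finite set of exceptional CM points into the $n \gg 0$ hypothesis and verifying that the $G(\infty)$-orbit in $\mathrm{N}_{H_{m,\mathcal{D}}}^{\mathcal{R}}$ projects onto a full $\mathrm{Gal}_{K}^{\mathrm{ab}}$-orbit in each coordinate, which follows from the diagonal equivariance of $\mathrm{C}\circ\mathrm{RED}$.
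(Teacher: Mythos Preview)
Your proof is correct and follows essentially the same approach as the paper: both arguments combine the non-exceptionality of $\theta_{m,\mathcal{D}}$ with Theorem \ref{cv} via a single-coordinate modification and subtraction. The only difference is organizational---the paper argues directly (choose $p_{1},p_{2}$ in a fiber where $\theta_{m,\mathcal{D}}$ is non-constant, realize them as $\mathrm{red}(y_{1}),\mathrm{red}(y_{2})$ with all other coordinates equal, subtract), while you phrase the same mechanism contrapositively; one small point is that your justification ``the $\mathrm{Gal}_{K}^{\mathrm{ab}}$-action on $\mathrm{N}_{H_{m,\mathcal{D}}}$ factors through $G(\infty)$'' is stronger than needed---what you actually use (and what holds) is that for $z=\mathrm{c}\circ\mathrm{red}(\tau_{0}.x)$ the $\mathrm{Gal}_{K}^{\mathrm{ab}}$-orbit of $z$ equals $\{\mathrm{c}\circ\mathrm{red}(\gamma.x):\gamma\in G(\infty)\}$, which follows from the equivariance of $\mathrm{c}\circ\mathrm{red}$ and the fact that the $\mathrm{Gal}_{K}^{\mathrm{ab}}$-action on $\mathrm{CM}_{H_{m,\mathcal{D}}}(\mathcal{P}^{\infty})$ factors through $G(\infty)$.
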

\begin{proof}
We follow the proof of Corollary $5.7$ in \cite{cor-vat}. Since $\theta_{m,\mathcal{D}}$ is non-exceptional as a $k$-valued function, there exists $\sigma\in G(\infty)$ such that $\theta_{m,\mathcal{D}}$ is non-constant as a $k$-valued function on $\mathrm{c}^{-1}(\mathrm{c}\circ\mathrm{red}(\sigma.x))$. Choose $p_{1},p_{2}\in\mathrm{c}^{-1}(\mathrm{c}\circ\mathrm{red}(\sigma.x))$ such that $\theta_{m,\mathcal{D}}(p_{1})\not\equiv\theta_{m,\mathcal{D}}(p_{2})$ $($in $k)$. If $n$ is sufficiently large, Theorem \ref{cv} guarantees the existence of $y_{1},y_{2}\in G(\infty).x$ such that $$\mathrm{red}(y_{1})=p_{1},\text{ }\text{ }\text{ }\mathrm{red}(y_{2})=p_{2}$$ and $$\mathrm{red}(\tau.y_{1})=\mathrm{red}(\tau.x)=\mathrm{red}(\tau.y_{2})\text{ }\text{ }\text{ }\text{ for all } \tau\neq1\text{ in }\mathcal{R}.$$ We thus obtain \begin{align*}\sum_{\tau\in \mathcal{R}}\chi_{0}(\tau)\psi_{m,\mathcal{D}}(\tau.y_{1})-\sum_{\tau\in \mathcal{R}}\chi_{0}(\tau)\psi_{m,\mathcal{D}}(\tau.y_{2})&=\theta_{m,\mathcal{D}}(p_{1})-\theta_{m,\mathcal{D}}(p_{2})\\&\not\equiv0\text{ }(\text{in }k).\end{align*} Therefore, at least one of the sums $\sum_{\tau\in \mathcal{R}}\chi_{0}(\tau)\psi_{m,\mathcal{D}}(\tau.y_{1})$ or $\sum_{\tau\in \mathcal{R}}\chi_{0}(\tau)\psi_{m,\mathcal{D}}(\tau.y_{2})$ is non-zero in $k$.
\end{proof}

 We remark at the end of this section that one can easily obtain a lower bound on the $l$-adic valuation of the Gross-Zagier sum $\mathrm{a}(x,\chi)$. In fact, let $\nu$ be the largest integer such that $\theta$ is Eisenstein modulo $\lambda^{\nu}$. Then \begin{align*}\sum_{\sigma\in G(n)}\chi(\sigma)\theta\circ\text{red}(\sigma.x)&=\sum_{\sigma\in G(n)}\chi(\sigma)\theta(\text{red}(\sigma.x))\\&\equiv\sum_{\sigma\in G(n)}\chi(\sigma)\theta(\text{c}\circ\text{red}(\sigma.x))\\&\equiv\sum_{\sigma\in G(n)}\chi(\sigma)\theta(\sigma.\text{c}\circ\text{red}(x))\\&\equiv\sum_{\sigma\in G(n)}\chi(\sigma)\theta(\text{nrd}(\beta)\text{c}\circ\text{red}(x))\\&\equiv0\mod\lambda^{\nu},\end{align*} where the last line follows from the orthogonality property of group characters. Hence, $$ord_{\lambda}\left(\sum_{\sigma\in G(n)}\chi(\sigma)\psi(\sigma.x)\right)\geq\nu.$$ It is obvious that this simple observation combined with Theorem \ref{mainthm} would give an exact value for the $l$-adic valuation of $\mathrm{a}(x,\chi)$ if we have $\nu+1=\mu$. However, it is not clear to us whether the statement $\nu+1=\mu$ is true or not. This is a very interesting question, but we choose not to discuss it in this work. We remark only that the answer seems to be connected to multiplicity-one-type results for the component group of a Shimura curve at Eisenstein primes.

\section{Toward Computing $ord_{\lambda}(L^{\mathrm{al}}(\pi,\chi,\frac{1}{2}))$}

In this final section, we establish an upper bound on the $\lambda$-adic valuation of the algebraic part of the special value $L(\pi,\chi,\frac{1}{2})$. Since the special value formula in Theorem \ref{martin} is given in terms of $\dfrac{L^{S_{2}(\pi)}(\pi,\chi,\frac{1}{2})}{L^{S_{2}(\pi)}(\pi,Ad,1)}$, we assume for simplicity of exposition that $\mathcal{N}$ is squarefree so that $S_{2}(\pi)=\emptyset$. We also assume that $l\neq p$ so that Theorem \ref{mainthm} simply implies that $\mathrm{ord}_{\lambda}(a(x,\chi))\leq\mu-1$, where $\mu$ is the constant defined in Definition \ref{cst}. 

Denote by $\mathbf{T}_{0}$ the Hecke algebra over $E_{l}$ formed in the usual way from adelic cuspidal Hilbert modular forms of parallel weight $(2,\cdotp\cdotp\cdotp,2)$ level $\mathcal{N}$ and trivial central character. Let $\pi_{g}:\mathbf{T}_{0}\rightarrow E_{l}$ be the canonical $E_{l}$-algebra homomorphism associated to the Hilbert modular form $g$, and let $\eta_{0}$ be an $E_{l}$-generator of the $E_{l}$-ideal $\pi_{g}(\mathrm{Ann}_{\mathbf{T}_{0}}(\mathrm{ker}(\pi_{g})))$. We now define the algebraic part of $L(\pi,\chi,\frac{1}{2})$ as $$L^{\mathrm{al}}(\pi,\chi,\frac{1}{2})=\frac{L(\pi,\chi,\frac{1}{2})}{\Omega_{g}^{\mathrm{can}}},$$ where $\Omega_{g}^{\text{can}}=4(4\pi)^{d}(2\pi)^{2d}\frac{(g,g)}{\eta_{0}}$ and $(g,g)$ is the Petersson norm as defined in \cite{hida}. The period $\Omega_{g}^{\mathrm{can}}$ is analogous to Hida's canonical period referred to in \cite{vatsal2} and \cite{pw}. 

We know that $L(\pi, Ad,1)=\dfrac{2^{2d-1}(g,g)}{\Delta_{F}^{2}h_{F}|\mathcal{N}|}$ (see \cite{fmp} \S 8.5). Using this identity, we can rewrite the special value formula (\ref{specialvalueformula}) as: $$|\mathrm{a}(\text{\bf{x}},\chi)|^{2}=\frac{L(\pi,\chi,\frac{1}{2})}{\Omega}\cdotp C_{\chi}(\pi),$$ where $$\Omega=4(4\pi)^{d}(2\pi)^{2d}\dfrac{(g,g)}{(\theta,\theta)}$$ and $$C_{\chi}(\pi)=p^{n}Q_{K}^{2}w_{K}^{2}h_{F}|\mathcal{N}|\Delta_{F}^{\frac{3}{2}}\Delta_{K}^{\frac{1}{2}}\cdotp L_{S(\pi)\cap S(\chi)^{c}}(\eta,1)L_{S(\pi)\cap S(\chi)}(1_{F},1).$$

Hence, we have $$|\mathrm{a}(\text{\bf{x}},\chi)|^{2}=\frac{L(\pi,\chi,\frac{1}{2})}{\Omega_{g}^{\mathrm{can}}\frac{\eta_{0}}{(\theta,\theta)}}\cdotp C_{\chi}(\pi).$$
Notice that if we don't want to assume that $S_{2}(\pi)=\emptyset$, all we have to do is multiply $C_{\chi}(\pi)$ by $\frac{L_{S_{2}(\pi)}(\pi,Ad,1)}{L_{S_{2}(\pi)}(\pi,\chi,\frac{1}{2})}.$ 

Adapting the notation from \cite{vatsal2}, we denote the ratio $\dfrac{\eta_{0}}{(\theta,\theta)}$ by $C_{\mathrm{csp}}$. We remark here that the denominator $(\theta,\theta)$ is mistakenly replaced in \cite{vatsal2} Lemma 2.5 by some congruence number $\eta_{B}$. The two quantities are not equal in general. This issue is also mentioned and addressed to some extent  in \cite{pw} Remark 2.4 and Theorem 6.2.
\begin{lemma}
Let $\chi_{0}$ be a given character of $G_{0}$ which is trivial on $G_{2}$. We have $$\mathrm{ord}_{\lambda}\left(|\mathrm{a}(\text{\bf{x}},\chi)|^{2}\right)\leq 2(\mu-1)$$ for all $\chi\in P(n,\chi_{0})$ with $n\gg0$.
\end{lemma}
\begin{proof}
We know that $\mathrm{ord}_{\lambda}\left(\mathrm{a}(\text{\bf{x}},\chi)\right)\leq \mu-1$ from Theorem \ref{mainthm}. Notice that $|\mathrm{a}(\text{\bf{x}},\chi)|^{2}=\mathrm{a}(\text{\bf{x}},\chi)\mathrm{a}(\text{\bf{x}},\chi^{-1})$. Applying Theorem \ref{mainthm} with $\chi_{0}^{-1}$ instead of $\chi_{0}$ gives $\mathrm{ord}_{\lambda}\left(\mathrm{a}(\text{\bf{x}},\chi^{-1})\right)\leq \mu-1$ since $\chi^{-1}\in P(n,\chi_{0}^{-1})$. Therefore, $\mathrm{ord}_{\lambda}\left(|\mathrm{a}(\text{\bf{x}},\chi)|^{2}\right)\leq 2(\mu-1)$ for all $\chi\in P(n,\chi_{0})$ whenever $n$ is sufficiently large.
\end{proof}

\begin{corollary}
Let $\chi_{0}$ be a given character of $G_{0}$ which is trivial on $G_{2}$. We have $$\mathrm{ord}_{\lambda}\left(\frac{L^{\mathrm{al}}(\pi,\chi,\frac{1}{2})}{C_{\mathrm{csp}}}\cdotp C_{\chi}(\pi)\right)\leq 2(\mu-1)$$ for all $\chi\in P(n,\chi_{0})$ with $n\gg0$.
\end{corollary}
\section*{Acknowledgements}
It is a pleasure to thank Nike Vatsal for his guidance and many helpful discussions on the subject of this paper. I am also grateful to an anonymous referee for a number of corrections and suggestions that improved the exposition of this work.
\bibliographystyle{siam}
\bibliography{myrefs}

\end{document}